\newtheorem{thm}{Theorem}[section]
\newtheorem{ap}{Assumption}[section]
\newtheorem{prop}[thm]{Proposition}
\theoremstyle{remark}
\newtheorem{rk}[thm]{Remark}
\newtheorem{ex}[thm]{Example}
\numberwithin{equation}{section}
\newcommand{\ee}{\mathbb E}
\newcommand{\pp}{\mathbb P}
\newcommand{\nn}{\mathbb N}
\newcommand{\rr}{\mathbb R}
\newcommand{\1}{\mathbf 1}
\newcommand{\BB}{\mathcal B}
\newcommand{\CC}{\mathcal C}
\newcommand{\LL}{\mathcal L}
\newcommand{\OOO}{\mathscr O}
\newcommand{\FFF}{\mathscr F}
\newcommand{\PPP}{\mathscr P}
\newcommand{\dif}{\mathrm{d}}
\newcommand{\<}{\langle}
\renewcommand{\>}{\rangle}
\allowdisplaybreaks \allowdisplaybreaks[4]
\newcommand{\dd}{\mathrm{d}}
\newcommand{\abs}[1]{\left\lvert #1 \right\rvert}
\newcommand{\norm}[1]{\left\lVert #1 \right\rVert}
\begin{document}

\title[Ergodic Estimates of One-Step Approximations for Superlinear SODEs]
{Ergodic Estimates of One-Step Numerical Approximations for Superlinear SODEs}
 
\author{Xin LIU}
\address{Department of Mathematics, Southern University of Science and Technology, Shenzhen 518055, China}
\email{12532002@mail.sustech.edu.cn}
 
\author{Zhihui LIU}
\address{Department of Mathematics \& National Center for Applied Mathematics Shenzhen (NCAMS) \& Shenzhen International Center for Mathematics, Southern University of Science and Technology, Shenzhen 518055, China}
\email{liuzh3@sustech.edu.cn (Corresponding author)}
 
\thanks{The authors are supported by the National Natural Science Foundation of China, No. 12101296, Guangdong Basic and Applied Basic Research Foundation, No. 2024A1515012348, and Shenzhen Basic Research Special Project (Natural Science Foundation) Basic Research (General Project), Nos. JCYJ20220530112814033 and JCYJ20240813094919026.}

\subjclass[2020]{60H35; 37M25; 65C30}

\date{}


\keywords{Ergodic estimate, Stein method, stochastic one-step approximations, superlinear stochastic differential equations}

\maketitle

\begin{abstract}
This paper establishes the first-order convergence rate for the ergodic errors and ergodic limits of numerical approximations to a class of stochastic ODEs (SODEs) with superlinear coefficients and multiplicative noise. By leveraging the generator approach within the Stein method, we derive a general error representation formula for one-step numerical schemes. Our framework applies to several recently studied schemes, including the tamed Euler, projected Euler, and backward Euler methods. Numerical experiments confirm the optimality of the derived ergodic estimates.
 \end{abstract}

\section{Introduction}

Consider the following $d$-dimensional SODE driven by an $m$-dimensional Wiener process $W$ in a complete filtered probability space  $(\Omega, \FFF, \{\FFF_t\}_{t \ge 0}, \pp)$:
 \begin{align}
\label{sde} 
    \dd X_t
    =
    b(X_t) 
  \, \dd t
    +
    \sigma(X_t) 
    \, \dd W(t), 
    \quad t >0. 
\end{align}
Here $b: \rr^d \rightarrow \rr^d$ and  $\sigma: \rr^d \rightarrow \rr^{d \times m}$ are measurable superlinear functions satisfying certain monotone and polynomial growth conditions.

Under appropriate conditions, Eq. \eqref{sde} admits a unique invariant measure $\pi$.
Explicitly characterizing $\pi$ is generally infeasible and is known only for specific cases, such as the gradient Langevin equation with additive noise or one-dimensional SODEs with multiplicative nondegenerate noise \cite{LL25+}. 
Consequently, constructing accurate numerical approximations of $\pi$ is a problem of significant interest. 
The study of long-time approximations for SODEs is of growing importance, with substantial applications in scientific computing, statistics, machine learning, and generative AI. For instance, sampling in score-based diffusion models -- a key technique in generative AI -- fundamentally relies on long-time SODE approximations. 
 
The Euler--Maruyama (EM) scheme is effective for Lipschitz coefficients (see, e.g., \cite{FSX19, RT96}) or Markovian switching diffusion (see, e.g., \cite{Sha18}). 
However, for coefficients with superlinear growth, the EM scheme would blow up in the $p$-th moment for all $p \ge 2$ \cite{HJK11}, which precludes ergodicity. 
As a result, several modified schemes have been developed to preserve ergodicity, including the backward Euler method (BEM) \cite{LMW23, MSH02, Tal02}, the stochastic theta method \cite{LL25}, and modified Euler methods (MEMs) such as the tamed Euler method (TEM) and projected Euler method (PEM) \cite{LWWZ25, LW24}; see also \cite{LMYY18} for Markovian switching diffusions.

A central question is to quantify the ergodic error $|\pi(\varphi)-\pi_\tau(\varphi)|$ between the numerical invariant measure $\pi_\tau$ and the exact invariant measure $\pi$ for a given step-size $\tau \in (0, 1)$ and a test function $\varphi$.
Using the generator approach \cite{Bar90} within the Stein method \cite{Ste72}, the authors of \cite{FSX19} derived an ergodic error representation for additive noise cases. They obtained a convergence rate of $\OOO(\tau^{1/2} |\ln \tau|)$ for the EM scheme (the unadjusted Langevin algorithm in the Langevin equation case) with Lipschitz drift and nondegenerate additive noise, where $\varphi$ is a Lipschitz function. This result was later generalized in [4] to dissipative Lipschitz systems with nondegenerate multiplicative noise and jump diffusions, removing the logarithmic factor for test functions in $\CC_b^3(\rr^d)$. We also refer to \cite{PP23} and the references therein for a nonasymptotic bound analysis of the decreasing-step EM scheme. 
 
In practical applications, a more natural goal is to approximate the ergodic limit, i.e., the spatial average $\pi(\phi)$ for a given test function $\phi$, using time-averaging estimators of the form $\frac{1}{N}\sum_{k=0}^{N-1}\mathbb E\varphi(Y_k)$.
Such estimators can be effectively computed via simulation. One then aims to characterize the error $|\frac{1}{N}\sum_{k=0}^{N-1}\mathbb E\varphi(Y_k)-\pi(\phi)|$ with respect to the number of steps $N$ and the step-size $\tau$. 
The authors of \cite{MST10} derived error estimates for time-averaging estimators of numerical schemes, based on the associated Stein equation and an assumption of local weak convergence order, for Eq. \eqref{sde} with Lipschitz coefficients on a torus (so that all coefficients are uniformly bounded). The superlinear case remains unknown.
Our representation of the ergodic error $|\pi(\varphi)-\pi_\tau(\varphi)|$ leads to a sharp error estimate for the ergodic limit, provided that geometrical ergodicity holds for the general class of one-step numerical approximations considered.

Since the weak convergence rate is typically twice the strong convergence rate, an ergodic error of order $\OOO(\tau)$ and an ergodic limit error of order $\OOO(\tau+N^{-1})$ are expected.
This paper achieves this optimal first-order rate for a family of superlinear SODEs with multiplicative noise and a class of one-step numerical approximations, assuming test functions belong to $\CC_b^4(\rr^d)$. 
 
To derive the above optimal convergence rates for the ergodic error and ergodic limit for a general one-step approximation, we first derive the Stein equation associated with the invariant measure of Eq. \eqref{sde}, as in \cite{FSX19}, and present one solution of this equation in terms of the Markov semigroup and the invariant measure (see Theorem \ref{tm-ste}). 
 
By exploiting the stationary property of invariant measures, we establish a relation between the discrete generator of the numerical approximations and the generator of Eq. \eqref{sde}. 
This allows us to derive a new representation of the ergodic error for a general class of one-step approximations to Eq. \eqref{sde} (see Theorem \ref{tm-err}).  
This error representation consists of two parts: the first pertains to the error induced by the choice of initial datum, and the second concerns the remainder error arising from the continuous interpolation process associated with the one-step approximations. An error estimate for the ergodic limit follows immediately, provided that geometrical ergodicity holds for these approximations.

Finally, we demonstrate that a general one-step approximation naturally fits into the aforementioned framework. In particular, we establish the optimal first-order convergence of the ergodic error for a sequence of numerical approximations, including the recently investigated TEM, PEM, and BEM, applied to the superlinear SODE \eqref{sde} driven by multiplicative noise. 
This analysis requires boundedness of the fourth derivatives for the solution of the Stein equation, which is ensured by the integrability in $\rr_+$ of the fourth derivatives for the solution of Eq. \eqref{sde} with respect to the initial datum. 

The paper is organized as follows. Section \ref{sec2} presents the main ideas, including the Stein equation and the general ergodic error representations. Section \ref{sec3} applies these results to superlinear SODEs and provides detailed analyses for specific numerical schemes. Numerical experiments verifying the optimality of these ergodic estimates are presented in the final section.

\section{Main Idea and Results}
\label{sec2}

This section introduces frequently used notations, the Stein equation and its solution, and the main result on the ergodic error for general one-step approximations.

\subsection{Notations}

Let us begin with some frequently used notations.  
Throughout, $\abs{\cdot}$, $\<\cdot, \cdot\>$, and $\norm{\cdot}_{HS}$ denote the Euclidean norm, the inner product in $\rr^d$, and the Hilbert--Schmidt norm in $\rr^{d \times m}$, respectively. 

We use $\BB_b(\rr^d)$ and ${\rm Lip}(\rr^d)$ to denote the Banach spaces of all bounded measurable and Lipschitz continuous mappings $\phi:\rr^d \rightarrow \rr$. 
 Moreover, for $k \in \nn_+$, let $C_b^k(\rr^d)$ be the Banach space of all $k$-times continuously differentiable functions with bounded partial derivatives $\nabla^i\varphi(x) \in \LL((\rr^d)^{\otimes i}; \rr)$, the $i$-fold linear functional in $\rr^d$ with $1\le i\le k$, endowed with the seminorm 
 $$|\varphi|_k:=\sum_{i=1}^k \sup_{x\in\rr^d} \|\nabla^i\varphi(x)\|_{\LL((\rr^d)^{\otimes i}; \rr)}.$$

\subsection{Stein Equation}

Assume Eq. \eqref{sde} has a unique continuous strong solution $(X_t^\cdot)_{t \ge 0}$ with Markov semigroup $(P_t)_{t \ge 0}$ given by
 \begin{align} \label{pt}
P_t \varphi(x)=\ee \varphi(X_t^x), 
\quad x\in\mathbb R, ~ \varphi \in \mathcal B_b(\rr^d), ~ t \ge 0.
\end{align}
Assume that $(X_t^\cdot)_{t \ge 0}$ has bounded algebraic moments in the sense that 
\begin{align*}
\sup_{t \ge 0} \ee |X_t^x|^p \le C(1+|x|^p), \quad p \ge 2, ~x \in \rr^d,
\end{align*}
and that $\ee \int_0^T |\sigma_j(X_t^\cdot)|^2 \dd t<\infty$ for all $T>0$ and $j=\{1,2,\cdots,m\}$, ensuring that $\int_0^\cdot \sigma(X_s^\cdot) \dd W(s)$ is a martingale.
Note that the definition \eqref{pt} is also valid for $\varphi \in {\rm Lip}(\rr^d)$.
Assume that $(P_t)_{t \ge 0}$ has a unique invariant measure $\pi$ with bounded algebraic moments and the mixing property:  
 \begin{align} \label{mix}
\lim_{t \to+\infty}P_t \varphi(x)=\pi(\varphi)=:\int_{\rr^d} \varphi(x) \pi(\dd x), \quad x\in\mathbb R, ~ \varphi \in \mathcal B_b(\rr^d).
\end{align}

For any $f \in C^2(\rr^d)$, It\^o formula gives 
 \begin{align}\label{ito}
\dif f(X_t) & =\mathcal Af(X_t) \dif t +\nabla f(X_t)\sigma(X_t)\dif W(t), \quad t > 0,
	\end{align}
where $\mathcal A$ is the infinitesimal generator of Eq. \eqref{sde}:
	\begin{align} \label{df-a}
		\mathcal Af(x):=\nabla f(x)b(x)+\frac{1}{2} \sum_{j=1}^m \nabla^2 f(x) (\sigma_j(x), \sigma_j(x)), \quad x \in \rr^d.
	\end{align}
From \eqref{ito} we obtain
	\begin{align} \label{af}
		\ee [f(X_t)-f(X_0)]=\int_0^t \ee \mathcal A f(X_s)\dif s,
		\quad t \ge 0.
	\end{align}
	If $X_0 \sim \pi$, then $\int_0^t \ee [\mathcal Af(X_s)]\dif s=0$ for any $t\geq 0$. 
	By the continuity of $X$, this implies $\ee [\mathcal A f(X_0)]=0$.
	
	Given an $\varphi \in \BB_b(\rr^d)$ or ${\rm Lip}(\rr^d)$, let us consider the Stein equation associated with the invariant measure $\pi$ of Eq. \eqref{sde}:
	\begin{align}\label{ste}
		\varphi(x)-\pi(\varphi)=\mathcal A f_\varphi(x), \quad x \in \rr^d.
	\end{align} 	
The following theorem constructs a solution to the Stein equation \eqref{ste}.

  \begin{thm} \label{tm-ste}
For any $\varphi \in \BB_b(\rr^d)$ or ${\rm Lip}(\rr^d)$ such that $P_\cdot \varphi \in L^1(0, \infty; \CC_b^2(\rr^d))$, Eq. \eqref{ste} admits a solution $f_\varphi \in \CC_b^2(\rr^d)$ given by
	\begin{align} \label{fp}
	f_\varphi=-\int_0^{+\infty} [P_t \varphi -\pi(\varphi)]\dif t.
	\end{align}  
  \end{thm}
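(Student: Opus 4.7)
The plan is to verify by direct computation that the candidate \eqref{fp} solves the Stein equation \eqref{ste}. I would structure the argument in three steps: (i) check that $f_\varphi$ is well-defined as an element of $\CC_b^2(\rr^d)$; (ii) bring the differential generator $\mathcal A$ inside the improper time integral; and (iii) evaluate the resulting time integral using a forward Kolmogorov identity together with the mixing property \eqref{mix}.

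Step (i) is immediate from the hypothesis $P_\cdot\varphi \in L^1(0,\infty;\CC_b^2(\rr^d))$: combined with \eqref{mix}, it makes $P_t\varphi-\pi(\varphi)$ Bochner integrable in $\CC_b^2(\rr^d)$, so $f_\varphi \in \CC_b^2(\rr^d)$ with derivatives obtained by differentiating under the integral sign. Step (ii) is then obtained by substituting these derivatives into \eqref{df-a} and using that $\mathcal A$ annihilates constants, yielding
$$\mathcal A f_\varphi(x) = -\int_0^{+\infty}\mathcal A P_t\varphi(x)\,\dif t.$$

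For step (iii), I would apply the identity \eqref{af} to the smooth function $P_r\varphi \in \CC_b^2(\rr^d)$ with $X_0=x$, together with the semigroup property $P_t(P_r\varphi)=P_{t+r}\varphi$, to obtain $P_{t+r}\varphi(x)-P_r\varphi(x) = \int_0^t P_s\mathcal A(P_r\varphi)(x)\,\dif s$; dividing by $t$ and letting $t\downarrow 0$ produces the Kolmogorov identity $\tfrac{\dif}{\dif r}P_r\varphi(x)=\mathcal A P_r\varphi(x)$ for $r>0$. The fundamental theorem of calculus then gives $\int_0^T \mathcal A P_t\varphi(x)\,\dif t = P_T\varphi(x)-\varphi(x)$, and passing $T\to +\infty$ via \eqref{mix} produces $\pi(\varphi)-\varphi(x)$; combining with the preceding display gives $\mathcal A f_\varphi(x)=\varphi(x)-\pi(\varphi)$, as required.

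The main technical points to guard will be (a) the interchange in step (ii), justified by the uniform control built into the $L^1(\CC_b^2)$ hypothesis, and (b) the pointwise limit $P_T\varphi(x)\to\pi(\varphi)$ in step (iii), which is exactly \eqref{mix}. The case $\varphi\in{\rm Lip}(\rr^d)$ needs no extra work, since \eqref{pt} is already meaningful for Lipschitz test functions as noted after its definition, and the $L^1$ hypothesis covers both cases uniformly.
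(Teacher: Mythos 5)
Your proposal is correct, but it runs in the opposite direction from the paper's argument, and the difference is worth noting. The paper starts from the Stein equation \eqref{ste} itself: it applies $P_t$ to both sides, integrates over $[0,T]$, uses the Dynkin-type identity \eqref{af} applied to $f_\varphi$ to get $\int_0^T P_t[\varphi(x)-\pi(\varphi)]\,\dif t = P_T f_\varphi(x)-f_\varphi(x)$, and then lets $T\to+\infty$ using \eqref{mix}; this derives the representation \eqref{fp} (up to the constant $\pi(f_\varphi)$) from the equation, i.e.\ it is a necessity-style calculation whose last line asserts sufficiency. You instead verify sufficiency directly: you differentiate \eqref{fp} under the integral (using the $L^1(0,\infty;\CC_b^2)$ hypothesis), reduce to $\mathcal A f_\varphi(x)=-\int_0^\infty \mathcal A P_t\varphi(x)\,\dif t$, and evaluate via the Kolmogorov identity $\tfrac{\dif}{\dif r}P_r\varphi=\mathcal A P_r\varphi$, obtained from \eqref{af} applied to $P_r\varphi$ plus the semigroup property, and finally use \eqref{mix}. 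Your route has two advantages: it proves exactly what the theorem claims (that the explicit formula is a solution), and it never applies the semigroup or the mixing property to $f_\varphi$ itself, whereas the paper needs $P_Tf_\varphi(x)\to\pi(f_\varphi)$ for a function that is a priori neither bounded nor Lipschitz. Two small points you should still acknowledge explicitly: the pointwise-in-$x$ integrability in $t$ of $P_t\varphi(x)-\pi(\varphi)$ (needed for \eqref{fp} to be well defined) does not follow from the seminorm-based $L^1(\CC_b^2)$ hypothesis alone and must be read into the assumptions, exactly as the paper implicitly does; and the limit $t\downarrow 0$ in your Kolmogorov identity requires continuity of $s\mapsto P_s(\mathcal A P_r\varphi)(x)$ at $s=0$, which follows from path continuity and the bounded algebraic moments assumed for the solution, since $\mathcal A P_r\varphi$ has at most polynomial growth under the standing assumptions. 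Neither point is a gap relative to the paper's own level of rigor.
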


  \begin{proof}
Let $T>0$.
Applying $P_t$ to both sides of Eq. \eqref{ste}, integrating over $t\in[0,T]$, and using the definition \eqref{pt} and the identity \eqref{af}, we obtain
	\begin{align}
		\begin{aligned}
	\int_0^TP_t[\varphi(x)-\pi(\varphi)]\dif t
	& =\int_0^TP_t\mathcal Af_\varphi(x)\dif t
			=\int_0^T\ee [\mathcal Af_\varphi(X_t^x)]\dif t \\
		&	=\ee [f_\varphi(X_T^x)-f_\varphi(X_0^x)]
		=P_T f_\varphi(x)-f_\varphi(x).
		\end{aligned}
	\end{align} 
Letting $T\to+\infty$ and using the mixing property \eqref{mix}, we derive
	\begin{align*}
		f_\varphi(x)=-\int_0^{+\infty}P_t[\varphi(x)-\pi(\varphi)]\dif t+\pi(f_\varphi),
	\end{align*}
which shows that $f_\varphi$ given in \eqref{fp} solves the Stein equation \eqref{ste}.
\end{proof}

  \begin{rk}
  The solution to the Stein equation \eqref{ste} is not unique even up to a constant. 
  Indeed, in the case $d=m=1$, any function 
  $f:=f_\varphi+f_{b, \sigma}$ with $f_{b, \sigma}'(x)=c_0\exp(\int_0^x -\frac{2 b(y)}{\sigma(y)^2} \dif y)$ for any constant $c_0$ is also a solution of Eq. \eqref{ste}.
\end{rk}
 
In the rest of this paper, we always take $f_\varphi$ as defined in \eqref{fp}.

\subsection{Ergodic Estimate of One-step Approximations}

Let $\tau \in (0, 1)$ be a fixed step-size, $t_k:=\tau k$, and $\delta_k W:=W(t_{k+1})-W(t_k)$ for $k \in \nn$.
We consider the following general one-step approximation of Eq. \eqref{sde}:
\begin{align} \label{y}
		Y_{k+1}=Y_k+ B_{b, \sigma}(\tau, Y_k, Y_{k+1}, \delta_k W),
	\end{align}
	where the explicit or implicit function $B_{b, \sigma}$ depends on the coefficients $b$ and $\sigma$, the step-size $\tau$, the previous value $Y_k$, the current value $Y_{k+1}$, and the Brownian increment $\delta_k W$ over the subinterval $[t_k, t_{k+1}]$, $k \in \nn$.
	
	Denote by $Y_k^x$ the $k$-th iterative starting from $Y_0=x$.
	We assume that the discrete process $(Y_k)_{k\in\mathbb N}$ is a homogeneous Markov chain with a unique invariant measure $\pi_\tau$. 
	 
Assume that $(Y_k)_{k \in \nn}$ defined in \eqref{y} has a continuous interpolation $(\hat Y_t)_{t \ge 0}$ which is an It\^o process and has the following form on the subinterval $[0, \tau]$:
\begin{align} \label{yhat}
		\hat Y_t=\hat Y_0+\int_0^t \hat b_\tau(\tilde Y_0)\dif s+\int_0^t \hat \sigma_\tau(\tilde Y_0)\dif B_s, \quad t \in [0, \tau],
	\end{align} 
	with $\FFF_0$-measurable random variables $\hat Y_0$ and $\tilde Y_0$ depending on the data of Eq. \eqref{sde}.
	More precisely, we assume there exists two functions $g_\tau, \tilde g_\tau: \rr^d \to \rr^d$ such that $\hat Y_0=g_\tau(Y_0)$ and $\tilde Y_0=\tilde g_\tau(Y_0)$.
	
Denote by	$\hat Y_\tau^x$ the solution $\hat Y_\tau$ of \eqref{yhat} with initial datum $\hat Y_0=x$, and define a linear operator
\begin{align} \label{Atau}
\mathcal A_\tau f(x):=\ee [f (\hat Y_\tau^x)-f(x)],
\quad x \in \rr^d, ~ f \in \CC_b^2(\rr^d).
\end{align}
In particular, when $\hat Y_0=Y_0$ and $\hat Y_\tau=Y_1$, then $\mathcal A_\tau$ is the generator of the Markov chain $(Y_k)_{k\in\mathbb N}$.  
	Similarly to \eqref{af}, we have 
	\begin{align*}
				\mathcal A_\tau f(x)
				=&\ee \int_0^\tau \Big[\nabla f(\hat Y_s^x) \hat b_\tau(\tilde g_\tau(x))
				+\frac{1}{2}  \sum_{j=1}^m \nabla^2 f(\hat Y_s^x) (\hat \sigma_{j,\tau}(\tilde Y_0), \hat \sigma_{j,\tau}(\tilde g_\tau(x)))\Big] \dif s.
			\end{align*}   
Comparing with \eqref{df-a} of $\mathcal A f(x)$, we have the following representation of $\mathcal A_\tau f(x)$:
		\begin{align} \label{atau-a}
		\mathcal A_\tau f(y)=\tau \mathcal Af(x)
		+ \sum_{i=1}^6 \ee \mathcal R_i(x, y),  
		\end{align} 
		where 
\begin{align} \label{R}
\begin{split}
\mathcal R_1(x, y) & :=\int_0^\tau (\nabla f(\hat Y_s^y)-\nabla f(y)) \hat b_\tau(\tilde g_\tau(x))\dif s, \\
\mathcal R_2(x, y) & :=\frac{1}{2} \sum_{j=1}^m \int_0^\tau [\nabla^2 f(\hat Y_s^y)-\nabla^2 f(y)] (\hat \sigma_{j, \tau}(\tilde g_\tau(x)), \hat \sigma_{j, \tau}(\tilde g_\tau(x))) \dif s,  \\
\mathcal R_3(x, y) & := (\nabla f(y)-\nabla f(x)) \hat b_\tau(\tilde g_\tau(x)) \tau,   \\
\mathcal R_4(x, y) & :=\frac{1}{2} \sum_{j=1}^m [\nabla^2 f(y)-\nabla^2 f(x)] (\hat \sigma_{j, \tau}(\tilde g_\tau(x)), \hat \sigma_{j, \tau}(\tilde g_\tau(x)))  \tau,  \\
\mathcal R_5(x, y) & := \nabla f(x) [\hat b_\tau(\tilde g_\tau(x))-b(x)] \tau,    \\
\mathcal R_6(x, y) & :=\frac{1}{2} \sum_{j=1}^m [\nabla^2 f(x) (\hat \sigma_{j, \tau}(\tilde g_\tau(x)), \hat \sigma_{j, \tau}(\tilde g_\tau(x))) - \nabla^2 f(x) (\sigma_{j, \tau}(x), \sigma_{j, \tau}(x))] \tau.   
\end{split}
\end{align}
for $x, y \in \rr^d$. 
	 
We now state the main result on the error representation between the exact invariant measure $\pi$ of Eq. \eqref{sde} and the numerical invariant measure $\pi_\tau$ of the associated one-step approximation \eqref{y}, along with the corresponding error estimate for the ergodic limit.

	\begin{thm} \label{tm-err}
For any $\varphi \in \BB_b(\rr^d)$ or ${\rm Lip}(\rr^d)$ such that $P_\cdot \varphi \in L^1(0, \infty; \CC_b^2(\rr^d))$ and $X_0 \sim \pi_\tau$, there holds that 
		\begin{align} \label{err}
		|\pi_\tau(\varphi)-\pi(\varphi)|
=\tau^{-1} \Big|\ee ^{\pi_\tau}\mathcal A_\tau f_\varphi(g_\tau(X_0))
- \sum_{i=1}^6 \ee ^{\pi_\tau} [\ee \mathcal R_i(x, g_\tau(x))|_{x=X_0}] \Big|.
		\end{align}
Assume furthermore that the one-step approximation \eqref{y} is $V$-geometrically ergodic (polynomially mixing), i.e., there exist positive constants $C$ and $\rho \in (0, 1)$ such that $|\mathbb E\varphi(Y_k^x)-\pi_\tau(\varphi)| \le CV(x)\rho^k$.
Then
\begin{align} \label{lim}
& \Big|\frac{1}{N}\sum_{k=0}^{N-1}\mathbb E\varphi(Y_k^x)-\pi(\varphi) \Big| \nonumber  \\
& \le \frac{CV(x)}{(1-\rho)N}+ \tau^{-1} \Big|\ee ^{\pi_\tau}\mathcal A_\tau f_\varphi(g_\tau(X_0)) - \sum_{i=1}^6 \ee ^{\pi_\tau} [\ee \mathcal R_i(x, g_\tau(x))|_{x=X_0}] \Big|.		
		\end{align}
	\end{thm}

	\begin{proof}
Let $X_0 \sim \pi_\tau$ and $\varphi$ as stated.
From the Stein equation \eqref{ste} and the representation \eqref{atau-a}, we have 
\begin{align*}
 |\pi_\tau(\varphi)-\pi(\varphi)|
=& |\ee^{\pi_\tau} \varphi(X_0)-\pi(\varphi)| \\
=& |\ee^{\pi_\tau}\mathcal Af_\varphi(X_0)| \\
=& \tau^{-1} \Big|\ee ^{\pi_\tau}\mathcal A_\tau f_\varphi(g_\tau(X_0))
- \sum_{i=1}^6  \ee ^{\pi_\tau}  [\ee \mathcal R_i(x, g_\tau(x))|_{x=X_0}] \Big|,
\end{align*}
which proves \eqref{err}.

By triangle inequality and the $V$-geometrical ergodicity of $\{Y_k\}$, we obtain 
\begin{align*}
\Big|\frac{1}{N}\sum_{k=0}^{N-1}\mathbb E\varphi(Y_k^x)-\pi_\tau(\varphi) \Big|
& \le \Big|\frac{1}{N}\sum_{k=0}^{N-1}\mathbb E\varphi(Y_k^x)-\pi_\tau(\varphi) \Big|
+ |\pi_\tau(\varphi)-\pi(\varphi)| \\
& = \frac{1}{N}\sum_{k=0}^{N-1} |\mathbb E\varphi(Y_k^x)-\pi_\tau(\varphi) |
+ |\pi_\tau(\varphi)-\pi(\varphi)|.
\end{align*}
Inequality \eqref{lim} then follows from the above estimate and the representation \eqref{err}.
	\end{proof}

\section{Applications to Superlinear SODEs}
\label{sec3}

This section applies the results of Theorem \ref{tm-err} to a family of one-step approximations for superlinear SODEs.

Although the same methodology can handle a broad class of examples, we focus on several recently studied one-step numerical schemes, including the tamed Euler, projected Euler, and backward Euler methods, in Example \ref{ex-sde}.

\subsection{Assumptions and Preliminary Estimates}

We impose the following frequently used dissipativity and polynomial growth conditions.

\begin{ap} \label{A1}
There exist positive constants $L_1,L_2, L_3$, $p^{\star} \ge 2$, and $\gamma >1$ such that for any $x,y\in\rr^d$,
     \begin{align}
       &  \< 
         x-y, b(x)-b(y)
         \> 
         +
         \frac{2p^{\star}-1}{2}
         \|\sigma(x)
         -
         \sigma(y)\|^2
         \le  -L_1 |x-y|^2,\label{mon}\\
   & \< x, b(x)\>
    +
    \frac{p^{\star}(2p^{\star}-1)}{2}
    \|\sigma(x)\|_{HS}^2
    \le 
   L_2 -L_3|x|^{\gamma+1}.\label{coe}
\end{align}
\end{ap}

\begin{ap} \label{A2}
The functions $b,\sigma$ have continuous derivatives up to order four, and there exists $C>0$ such that for any $k=1,2,3,4$ and $x, v_k \in \rr^d$, 
    \begin{align*}
            |
            \nabla^k b(x)
            (v_1,\cdots, v_k)
            | 
            & \le  
            C
           \big [
            \1_{\gamma \le  k}
            +
            \1_{\gamma >k}
            (1+|x|^{\gamma-k})
           \big ]|v_1| 
            \cdots |v_k|, \\
            |
            \nabla^k \sigma_j(x)
            (v_1, \cdots, v_k)
            |^2 
            &  \le  
            C
            [
            \1_{\gamma \le  2k-1}
            +
            \1_{\gamma >2k-1}
            (1+|x|^{\gamma-(2k-1)})
            ]|v_1|^2\cdots|v_k|^2.
    \end{align*} 
\end{ap}

\begin{rk}
Under Assumption \ref{A1}, Eq. \eqref{sde} has a unique strong solution, which is a Markov process exponentially ergodic to a unique invariant measure $\pi$ \cite[Theorem 3.1.1, Proposition 4.3.5, and Theorem 4.3.9]{LR15}.
Moreover, if $X_0 \in L^{2p^*}$, then  
\begin{align*}
\sup_{t \ge 0} \ee |X_t|^{2p^*} \le C (1+ \ee |X_0|^{2p^*}),  
\end{align*}
provided $X_0 \in L^{2p^*}$.
Under the additional Assumption \ref{A2}, it is clear, for all $T>0$ and $j=\{1,2,\cdots,m\}$, that $\ee \int_0^T |\sigma_j(X_t)|^2 \dd t  \le C T (1+ \ee |X_0|^{\gamma+1}) <\infty$ so that $\int_0^\cdot \sigma(X_s) \dd W(s)$ is a martingale; see, e.g., \cite{Liu22}. 
\end{rk}

\subsection{Estimates of Stein Equation}

To apply Theorems \ref{tm-ste} and \ref{tm-err} in the present settings, we need to show that $P_\cdot \varphi \in L^1(0, \infty; \CC_b^2(\rr^d))$ for a given test function $\varphi$ and to derive necessary derivative estimates for the Stein equation solution $f_\varphi$. 

We have the following estimates on the directional derivatives $\nabla^i X_t^\cdot$ with $i=1, 2, 3, 4$ and $t \ge 0$; see \cite[Lemma 3.3]{LWWZ25} for a weak result in the case of weak dissipativity.

\begin{prop} \label{prop-DX}
Let Assumptions \ref{A1} and \ref{A2} hold and $q \in (1, p^{\star})$.  
There exist positive constants $\lambda$, $\gamma_1^*$, and $C$ such that  
\begin{align}   \label{DX-est}
\sum_{i=1}^4 \|\nabla^i X_t^x\|_{L^{2q}(\Omega; \LL((\rr^d)^{\otimes i}; \rr))}
    & \le  C e^{-\lambda t} (1+ |x|^{\gamma_1^*}), \quad x \in \rr^d, ~ t \ge 0. 
\end{align}
Consequently, $P_\cdot \varphi \in L^1(0, \infty; \CC_b^2(\rr^d))$ for any $\varphi \in \CC_b^2(\rr^d)$ and there exist positive constant $C$ and $\gamma_1$ such that for any $\varphi \in \CC_b^i(\rr^d)$,  
\begin{align}   \label{est-fp}
\|\nabla^i f_\varphi^x\|_{\LL((\rr^d)^{\otimes i}; \rr)}
    & \le   C (1+|x|^{\gamma_1})  \|\nabla^i \varphi\|, \quad i=1,2,3,4. 
\end{align}
\end{prop}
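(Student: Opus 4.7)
The plan is to prove the derivative estimate \eqref{DX-est} by induction on $i$, and then deduce \eqref{est-fp} via the chain rule and integration of \eqref{fp} in $t$. For $i=1$, I would consider the first variational process $\eta_t:=\nabla X_t^x\cdot v$, which satisfies the linear SDE
\begin{equation*}
\dd\eta_t = \nabla b(X_t)\eta_t\,\dd t + \sum_{j=1}^m \nabla\sigma_j(X_t)\eta_t\,\dd W^j_t, \quad \eta_0=v.
\end{equation*}
Applying It\^o's formula to $|\eta_t|^{2q}$ and absorbing the cross term in the quadratic variation by Cauchy--Schwarz yields a drift bounded by $2q|\eta_t|^{2q-2}[\<\eta_t,\nabla b(X_t)\eta_t\>+\frac{2q-1}{2}\sum_j|\nabla\sigma_j(X_t)\eta_t|^2]$. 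Infinitesimal differentiation of \eqref{mon} along $v$ gives $\<v,\nabla b(x)v\>+\frac{2p^{\star}-1}{2}\sum_j|\nabla\sigma_j(x)v|^2\le -L_1|v|^2$, and since $q<p^{\star}$ the drift is dominated by $-2qL_1|\eta_t|^{2q}$. Gronwall then gives the $i=1$ case of \eqref{DX-est} uniformly in $x$.

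For $i\in\{2,3,4\}$, I would set $\eta_t^{(i)}:=\nabla^i X_t^x(v_1,\ldots,v_i)$ and differentiate \eqref{sde} $i$ times to obtain the inhomogeneous variational SDE
\begin{equation*}
\dd\eta_t^{(i)} = [\nabla b(X_t)\eta_t^{(i)} + H_t^{(i)}]\,\dd t + \sum_j[\nabla\sigma_j(X_t)\eta_t^{(i)} + G_{j,t}^{(i)}]\,\dd W^j_t, \quad \eta_0^{(i)}=0,
\end{equation*}
whose forcing terms $H_t^{(i)},G_{j,t}^{(i)}$ are Fa\`a-di-Bruno sums of $\nabla^k b(X_t)(\eta_t^{(i_1)},\ldots,\eta_t^{(i_k)})$ and $\nabla^k\sigma_j(X_t)(\eta_t^{(i_1)},\ldots,\eta_t^{(i_k)})$ with $2\le k\le i$, $i_l\ge 1$, and $i_1+\cdots+i_k=i$. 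The same It\^o/monotonicity computation as in the base case produces a drift $-2qL_1|\eta_t^{(i)}|^{2q}$ on the homogeneous part, and Young's inequality reduces the remaining contribution to estimating $\ee|H_t^{(i)}|^{2q}$ and $\ee|G_{j,t}^{(i)}|^{2q}$.

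To close the induction, I would bound each summand of the forcing as follows. Assumption \ref{A2} controls $|\nabla^kb(x)|$ by $C(1+|x|^{(\gamma-k)_+})$ and $|\nabla^k\sigma_j(x)|^2$ by $C(1+|x|^{(\gamma-2k+1)_+})$; the remark following Assumptions \ref{A1}--\ref{A2} supplies the uniform moment bound $\sup_t\ee|X_t^x|^{2p^{\star}}\le C(1+|x|^{2p^{\star}})$; and the inductive hypothesis controls $\ee|\eta_t^{(i_l)}|^{2\bar q}$ at slightly enlarged exponents $\bar q\in(q,p^{\star})$ by exponentially decaying expressions with polynomial growth in $x$. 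A H\"older decomposition matching these three ingredients gives an $L^{2q}$-bound of the form $Ce^{-\alpha_i t}(1+|x|^{\gamma_1^*})$ on each forcing term, with $\alpha_i>0$, which, inserted into the Gronwall inequality at decay rate $2qL_1$, preserves the exponential decay at level $i$ and supplies the polynomial $x$-factor appearing in \eqref{DX-est}.

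Finally, for \eqref{est-fp} I would differentiate $P_t\varphi(x)=\ee\varphi(X_t^x)$ via the Fa\`a-di-Bruno formula to express $\nabla^iP_t\varphi(x)(v_1,\ldots,v_i)$ as a linear combination of $\ee[\nabla^j\varphi(X_t^x)(\text{products of }\nabla^{i_l}X_t^x)]$ with $j\le i$ and $i_1+\cdots+i_j=i$. H\"older and the just-proved \eqref{DX-est} yield the pointwise bound $|\nabla^iP_t\varphi(x)|\le C|\varphi|_i\,e^{-\lambda' t}(1+|x|^{\gamma_1})$ for some $\lambda'\in(0,\lambda)$, so that $t\mapsto\nabla^iP_t\varphi(x)$ is integrable on $\rr_+$; integrating \eqref{fp} in $t$ then gives \eqref{est-fp}. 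The hard step will be the third one: the superlinear factor $|X_t|^{\gamma-k}$ coming from $\nabla^kb$ must be absorbed by the $L^{2p^{\star}}$-moment reservoir of $X_t$ while the $\eta_t^{(i_l)}$ factors are simultaneously evaluated at exponents strictly greater than $2q$, and a careful bookkeeping must keep all these exponents inside the admissible window $(1,p^{\star})$ at every derivative level $i\le 4$; this is precisely where the restriction $q\in(1,p^{\star})$ in the statement becomes essential.
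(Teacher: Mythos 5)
Your proposal follows essentially the same route as the paper's proof: differentiate the flow to obtain the variational SDEs, apply It\^o's formula to $|\cdot|^{2q}$ and the differentiated monotonicity condition \eqref{mon} (with Young's inequality to absorb cross terms, using $q<p^{\star}$), bound the Fa\`a-di-Bruno forcing terms via Assumption \ref{A2}, H\"older, the moment bound on $X$, and the lower-order estimates, then conclude by Gronwall; the bound \eqref{est-fp} is likewise obtained by differentiating $P_t\varphi(x)=\ee\varphi(X_t^x)$ and integrating \eqref{fp} in $t$. Your explicit retention of the exponential decay of the forcing before Gronwall, and the bookkeeping of the enlarged H\"older exponents inside $(1,p^\star)$, are exactly the (implicit) ingredients of the paper's argument, so the approaches coincide.
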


\begin{proof} 
Let $q \in (1,p^{\star})$, $t \ge 0$, and $x, v_1, v_2, v_3, v_4 \in \rr^d$.
In what follows, we denote
\begin{align*}
    \eta^{v_1}(t, x)
    := \nabla X_t^{x} v_1, \quad 
    & \xi^{v_1, v_2}(t, x)
    := \nabla^2 X_t^{x}(v_1, v_2), \\ 
    \zeta^{v_1, v_2,v_3}(t, x)
    :=  \nabla^3 X_t^{x}(v_1, v_2,v_3), \quad 
    & \varpi^{v_1, v_2,v_3, v_4}(t, x)
    :=  \nabla^4 X_t^{x}(v_1, v_2,v_3, v_4).  
\end{align*} 

It follows from \eqref{mon} that
\begin{align}
\label{D-coup}
    \< \nabla b(x) y, y\> +
    \frac{2p^{\star}-1}{2}
    \|D\sigma(x)y\|^2\le  -L_1|y|^2,
    \quad x, y \in \rr^d.
\end{align}
For $\eta^{v_1}(t,x)$, we have $\eta^{v_1}(0,x)=v_1$ and for $t>0$, 
\begin{align*}
\dd \eta^{v_1}(t,x)
    =
    \nabla b(X_t^x) \eta^{v_1}(t, x) 
   \dd t
    +
    \sum_{j=1}^m
    \nabla \sigma_j(X_t^x) \eta^{v_1}(t, x) 
   \dd W_j(t).
\end{align*}
Define a stopping time 
\begin{align*}
\tau_n:
= \inf \{ s \geq 0:|\eta^{v_1}(s,x)|>n\}.
\end{align*}
Using It\^o formula and the inequality \eqref{D-coup} implies
\begin{align*}
        & \frac1{2q} [\ee | \eta^{v_1}(t \wedge \tau_n, x)|^{2q}- |v_1|^{2q}] \nonumber\\
        & =\ee \int_0^{t \wedge \tau_n}
 |\eta^{v_1}( x)|^{2(q-1)} \Big[ \< \eta^{v_1}(x), \nabla b(X^x) \eta^{v_1}(x) \> \nonumber\\
& \qquad + \frac{2q-1}{2} \sum_{j=1}^m |\nabla \sigma_j(X^x) \eta^{v_1}(x)  |^2 \Big]
       \dd s \nonumber\\
        & \le  -L_1  \ee \int_0^{t \wedge \tau_n}
        |\eta^{v_1}(x)|^{2q}
       \dd s.
\end{align*}
We omit the integration variable here and after when there is an integration to lighten the notation. 
Using Gronwall inequality and Fatou lemma, we get 
\begin{align}
        \ee|\nabla X_t^x v_1 |^{2q}
       & \le   e^{-2q\lambda t}|v_1|^{2q}. \label{D1X-est} 
\end{align}

Similarly, $ \xi^{v_1, v_2}(t, x)$ satisfies $ \xi^{v_1, v_2}(0, x)=0 $ and
\begin{align*}
  \dd \xi^{v_1, v_2}(t, x)
    &= [\nabla b(X_t^x)   \xi^{v_1, v_2}(t, x) +  \nabla^2 b(X_t^x)
        (\eta^{v_1}(t, x), \eta^{v_2}(t, x))]
   \dd t  \\
    &\quad +
    \sum_{j=1}^m [\nabla \sigma_j(X_t^x) \xi^{v_1, v_2}(t, x)  +
        \nabla^2 \sigma_j(X_t^x) (\eta^{v_1}(t, x),  \eta^{v_2}(t, x))] 
   \dd W_j(t),
      \end{align*}
   so that (introducing an analogous stopping time if necessary)
   \begin{align*} 
        &\frac{1}{2q}\ee|\xi^{v_1,v_2}(t,x)|^{2q} \nonumber\\
      &  = \ee \int_0^t 
 |\xi^{v_1,v_2}(x)|^{2(q-1)} \Big[ \< \xi^{v_1,v_2}(x), \nabla b(X^x) \xi^{v_1,v_2}(x)
 +  \nabla^2 b(X^x) (\eta^{v_1}(x), \eta^{v_2}(x)  ) \> \nonumber\\
& \qquad + \frac{2q-1}{2} \sum_{j=1}^m |\nabla \sigma_j(X^x) \xi^{v_1, v_2}(x)  +
        \nabla^2 \sigma_j(X^x) (\eta^{v_1}(x),  \eta^{v_2}(x))|^2 \Big]
       \dd s.
\end{align*} 
By Young inequality and \eqref{D-coup}, for an arbitrarily small $\epsilon>0$ such that $\frac{2q-1}{2} (1+\epsilon) \le \frac{2p^{\star}-1}{2}$, there exists $\delta=\delta(\epsilon) \in (0, L_1)$ and $C_{\epsilon, \delta}>0$ such that   
\begin{align*}
        &\frac{1}{2q}\ee|\xi^{v_1,v_2}(t,x)|^{2q}  \\
       & \le \ee \int_0^t |\xi^{v_1,v_2}(x)|^{2(q-1)} \big[\epsilon|\xi^{v_1,v_2}(x)|^2 
        + (2\epsilon)^{-1}|\nabla^2 b(X^x)(\eta^{v_1}(x),\eta^{v_2}(x))|^2\big]\dd s \nonumber\\
        & \quad +\ee\int_0^t |\xi^{v_1,v_2}(x)|^{2(q-1)} [\<\xi^{v_1,v_2}(x),\nabla b(X^x)\xi^{v_1,v_2}(x)\> \nonumber\\
 & \qquad \qquad \qquad \qquad \qquad \qquad + \frac{2q-1}{2} (1+\epsilon) \sum_{j=1}^m |\nabla\sigma_j(X^x)\xi^{v_1,v_2}(x)|^2] \dd s \nonumber\\
        & \quad + \frac{2q-1}{2} (1+\epsilon^{-1}) \sum_{j=1}^m  \ee\int_0^t |\xi^{v_1,v_2}(x)|^{2(q-1)} |\nabla^2\sigma_j(X^x)(\eta^{v_1}(x),\eta^{v_2}(x))|^2 \dd s \nonumber\\
        & \le -(L_1-\delta)\int_0^t\ee |\xi^{v_1,v_2}(x)|^{2q}\dd s + C_{\epsilon,\delta} \underbrace{\ee\int_0^{t }|\nabla^2 b(X^x)  (\eta^{v_1}(x), \eta^{v_2}(x))|^{2q} \dd s}_{I_{11}} \nonumber\\
        & \quad + C_{\epsilon,\delta} \underbrace{\sum_{j=1}^m\ee\int_0^{t }|
        	\nabla^2 \sigma_j(X^x) (\eta^{v_1}(x), \eta^{v_2}(x))
        	|^{2q} \dd s}_{I_{12}}.  
\end{align*}

It remains to estimate $I_{11}$ and $I_{12}$.
We employ Assumption \ref{A2},  H\"older inequality, and the estimate \eqref{D1X-est} to arrive at 
\begin{align*}
	I_{11}\le & C \ee \int_0^t\big[\1_{\gamma\in(1,2]}+\1_{\gamma>2}(1+|X^x|^{\gamma-2})\big]^{2q} |\eta^{v_1}(x)|^{2q}|\eta^{v_2}(x)|^{2q} \dd s\nonumber\\
	\le & C\big[\1_{\gamma\in(1,2]}+\1_{\gamma>2}(1+|x|^{2q(\gamma-2)})\big]|v_1|^{2q}|v_2|^{2q}.
\end{align*}
In the same manner,
\begin{align*}
       I_{12}\le C\big[\1_{\gamma\in(1,3]}+\1_{\gamma>3}(1+|x|^{q(\gamma-3)})\big]|v_1|^{2q}|v_2|^{2q}.
\end{align*}
Combining the above two estimates, we have
\begin{align*}
        & \frac1{2q} \ee |\xi^{v_1, v_2} (t, x)|^{2q} + (L_1-\delta) \ee\int_0^{t }|\xi^{v_1, v_2}(x)|^{2q}\dd s \\
        & \le  C [ \1_{\gamma \in(1,2]} + \1_{\gamma \in [2,3]} (1+|x|^{2q(\gamma-2)})
        + \1_{\gamma >3}(1+|x|^{q(\gamma-3)})]|v_1|^{2q} |v_2|^{2q}.
\end{align*}
and in combination with Gronwall inequality, we derive
\begin{align}
\ee|\nabla^2 X_t^x(v_1, v_2) |^{2q}
        & \le C  e^{-2q\lambda t} 
        [ \1_{\gamma\in(1,2]} + \1_{\gamma >2}
       ( 1+  |x|^{2q(\gamma-2)} ) ]
        |v_1|^{2q} |v_2|^{2q}. \label{D2X-est}
\end{align}

For the third term $\zeta^{v_1,v_2,v_3}(t,{x})$, we get $\zeta^{v_1, v_2, v_3}(0, x)=0$ and 
\begin{align*}
        & \zeta^{v_1, v_2, v_3}(t, x)\\
        & =  
        (
        \nabla b(X_t^x) \zeta^{v_1, v_2, v_3}(t, x)
        +
        \nabla^2 b(X_t^x)
       (
        \eta^{v_1}(t, x), 
        \xi^{v_2, v_3}(t, x)
        )\\
        & \quad +
        \nabla^2b(X_t^x)
       (
        \xi^{v_1, v_3}(t, x), \eta^{v_2}(t, x)
       ) 
        +
        \nabla^2 b(X_t^x)
        (
        \xi^{v_1, v_2}(t, x), \eta^{v_3}(t, x)
        )\\
        & \quad +
        \nabla^3 b(X_t^x)
        (
        \eta^{v_1}(t, x), 
        \eta^{v_2}(t, x), 
        \eta^{v_3}(t, x)
        )
        ) 
       \dd t \\
        & \quad +
        \sum_{j=1}^m
        (
        \nabla \sigma_j
      (X_t^x) 
        \zeta^{v_1, v_2, v_3}(t, x)
        +
        \nabla^2 \sigma_j(X_t^x)
        (
        \eta^{v_1}(t, x), 
        \xi^{v_2, v_3}(t, x)
        )\\
        & \quad + 
         \nabla^2 \sigma_j(X_t^x)
        (
         \xi^{v_1, v_3}(t, x), \eta^{v_2}(t, x)
         )
         +
         \nabla^2 \sigma_j
         (X_t^x)
         (
         \xi^{v_2, v_3}(t, x), \eta^{v_1}(t, x)
         )\\
         & \quad +
         \nabla^3 \sigma_j
         (X_t^x)
         (
         \eta^{v_1}(t, x), 
         \eta^{v_2}(t, x), 
         \eta^{v_3}(t, x)
         )
         ) 
        \dd W_j(t)
         \\
         & = :
         (
         \nabla b
         (X_t^x) \zeta^{v_1, v_2, v_3}(t, x)
         +
         H(X_t^x))\dd t\\
         & \quad +
         \sum_{j=1}^m
         (
         \nabla \sigma_j(X_t^x) \zeta^{v_1, v_2, v_3}(t, x)
         +
         G_j(X_t^x)
         ) 
        \dd W_j(t).
\end{align*} 
As in the same arguments to deal with the term $\xi^{v_1,v_2}(t,x)$, we have
\begin{align*}
        &\quad \frac{1}{2q}\ee|\zeta^{v_1, v_2, v_3}(t , x)|^{2q}+ (L_1-\delta) \ee   \int_0^{t }|\zeta^{v_1, v_2, v_3}( x)|^{2q} \dd s \nonumber \\
        & \le C \underbrace{\ee   \int_0^{t }|H(X^x)|^{2q} \dd s}_{I_{21}}
    +C\underbrace{\sum_{j=1}^m \ee \int_0^t|G_j(X^x)|^{2q}\dd s}_{I_{22}}.
\end{align*}
We split the estimation of the term $I_{21}$ into the following four parts:
\begin{align*}
        I_{21} & \le 
    C    \int_0^{t } \ee |
        \nabla^2 b(X^x)
        (
        \eta^{v_1}(x), 
        \xi^{v_2, v_3}(x)
        )
      |^{2q}\dd s
      \\
        & \quad +
    C    \int_0^{t } \ee |
        \nabla^2 b(X^x)
        (
        \xi^{v_1, v_3}(x), \eta^{v_2}(x)
        )
      |^{2q} \dd s
        \\
        & \quad +
     C    \int_0^{t } \ee
      |
        \nabla^2 
        b(X^x)
        (\xi^{v_1, v_2}(x), \eta^{v_3}(x)
        )
        |^{2q} \dd s
        \\
        & \quad +C    \int_0^{t } \ee
       |
        \nabla^3 b(X^x)
        (
        \eta^{v_1}(x), 
        \eta^{v_2}(x), 
        \eta^{v_3}(x)
        )
        |^{2q} \dd s  \\
        &=:  I_{211}+ I_{212}+ I_{213}+ I_{214}.
\end{align*}
We mention that the analysis of $ I_{211}, I_{212}, I_{213}$ is similar and we take $ I_{211}$ as an example.
By Assumption \ref{A2}, H\"older inequality, and the estimates \eqref{D1X-est} and \eqref{D2X-est}, we have
\begin{align*}
     I_{211} \le  C (1 + |x|^{4q(\gamma-2)})|v_1|^{2q}| v_2|^{2q}.
\end{align*}
For $I_{214}$, we have
\begin{align*}
       I_{214}
         \le  
        C [
        \1_{\gamma \in(1,3]}
        +
        \1_{\gamma >3}
        (1+|x|^{2q(\gamma-3)})
        ]
        |v_1|^{2q}  |v_2|^{2q} |v_3|^{2q}.
\end{align*}
%
%
Hence, we get
\begin{align*}
      I_{21}
        & \le 
        C  [
        \1_{1 < \gamma \le 2}
        +
        \1_{\gamma >2}
        (1+|x|^{4q(\gamma-2)})
        ]
        |v_1|^{2q}
        |v_2|^{2q}
        |v_3|^{2q}.
\end{align*}
Similarly, it is not hard to obtain
\begin{align*}
       I_{22}& \le 
        C [
        \1_{\gamma \in(1,2]}
        +
        \1_{\gamma >2}
        (1+|x|^{4q(\gamma-2)})
        ] |v_1|^{2q}|v_2|^{2q}|v_3|^{2q}.
\end{align*}
Plugging these estimates together yields
\begin{align*}
       & \frac{1}{2q}\ee [ |\zeta^{v_1, v_2, v_3}
        (t, x)|^{2q}
        + (L_1-\delta) \ee
        \int_0^{t }
        |\zeta^{v_1, v_2, v_3}(x)|^{2q}
       \dd s \nonumber\\
        & \le 
        C [ \1_{1 < \gamma \le 2}
        +
        \1_{\gamma >2}
        (1+|x|^{4q(\gamma-2)})
        ]
        |v_1|^{2q}
        |v_2|^{2q}
        |v_3|^{2q}.
\end{align*}
By Gronwall inequality, we conclude 
\begin{align} \label{D3X-est}
  \ee|\nabla^3 X_t^x(v_1, v_2,v_3)|^{2q}
    & \le  C e^{-2q\lambda t} 
    [\1_{1 < \gamma \le 2}+\1_{\gamma >2}
   ( 1+  |x|^{4q(\gamma-2)} ) ] |v_1|^{2q} |v_2|^{2q}  |v_3|^{2q}.
\end{align} 

For the last term, we get $\varpi^{v_1,v_2,v_3,v_4}(0,x)=0$ and 

\begin{align*}
	\varpi^{v_1,v_2,v_3,v_4}(t,x)=&(\nabla b(X_t^x)\varpi^{v_1,v_2,v_3,v_4}(t,x)+J_{31})\dd t\\
	&+\sum_{j=1}^m(\nabla \sigma_j(X_t^x)\varpi^{v_1,v_2,v_3,v_4}(t,x)+J^j_{32})\dd W_j(t).
\end{align*}
where
\begin{align*}
	J_{31}=&\sum_{i=1}^{14} J_{31i}\\
	=&\nabla^2b(X_t^x)(\eta^{v_1}(t,x),\zeta^{v_2,v_3,v_4}(t,x))\\
	&+\nabla^2b(X_t^x)(\eta^{v_2}(t,x),\zeta^{v_1,v_3,v_4}(t,x))\\
	&+\nabla^2b(X_t^x)(\eta^{v_3}(t,x),\zeta^{v_1,v_2,v_4}(t,x))\\
	&+\nabla^2b(X_t^x)(\eta^{v_4}(t,x),\zeta^{v_1,v_2,v_3}(t,x))\\
	&+\nabla^2b(X_t^x)(\xi^{v_1,v_2}(t,x),\xi^{v_3,v_4}(t,x))\\
	&+\nabla^2b(X_t^x)(\xi^{v_1,v_3}(t,x),\xi^{v_2,v_4}(t,x))\\
	&+\nabla^2b(X_t^x)(\xi^{v_1,v_4}(t,x),\xi^{v_2,v_3}(t,x))\\
	&+\nabla^3 b(X_t^x)(\xi^{v_1,v_2}(t,x),\eta^{v_3}(t,x),\eta^{v_4}(t,x))\\
	&+\nabla^3 b(X_t^x)(\xi^{v_1,v_3}(t,x),\eta^{v_2}(t,x),\eta^{v_4}(t,x))\\
	&+\nabla^3 b(X_t^x)(\xi^{v_1,v_4}(t,x),\eta^{v_2}(t,x),\eta^{v_3}(t,x))\\
	&+\nabla^3 b(X_t^x)(\xi^{v_2,v_3}(t,x),\eta^{v_1}(t,x),\eta^{v_4}(t,x))\\
	&+\nabla^3 b(X_t^x)(\xi^{v_2,v_4}(t,x),\eta^{v_1}(t,x),\eta^{v_3}(t,x))\\
	&+\nabla^3 b(X_t^x)(\xi^{v_3,v_4}(t,x),\eta^{v_1}(t,x),\eta^{v_2}(t,x))\\
	&+\nabla^4 b(X_t^x)(\eta^{v_1}(t,x),\eta^{v_2}(t,x),\eta^{v_3}(t,x),\eta^{v_4}(t,x)),  \\
	J^j_{32}=&\sum_{i=1}^{14} J^j_{32i}\\
	=&\nabla^2\sigma_j(X_t^x)(\eta^{v_1}(t,x),\zeta^{v_2,v_3,v_4}(t,x))\\
	&+\nabla^2\sigma_j(X_t^x)(\eta^{v_2}(t,x),\zeta^{v_1,v_3,v_4}(t,x))\\
	&+\nabla^2\sigma_j(X_t^x)(\eta^{v_3}(t,x),\zeta^{v_1,v_2,v_4}(t,x))\\
	&+\nabla^2\sigma_j(X_t^x)(\eta^{v_4}(t,x),\zeta^{v_1,v_2,v_3}(t,x))\\
	&+\nabla^2\sigma_j(X_t^x)(\xi^{v_1,v_2}(t,x),\xi^{v_3,v_4}(t,x))\\
	&+\nabla^2\sigma_j(X_t^x)(\xi^{v_1,v_3}(t,x),\xi^{v_2,v_4}(t,x))\\
	&+\nabla^2\sigma_j(X_t^x)(\xi^{v_1,v_4}(t,x),\xi^{v_2,v_3}(t,x))\\
	&+\nabla^3 \sigma_j(X_t^x)(\xi^{v_1,v_2}(t,x),\eta^{v_3}(x,t),\eta^{v_4}(t,x))\\
	&+\nabla^3 \sigma_j(X_t^x)(\xi^{v_1,v_3}(t,x),\eta^{v_2}(t,x),\eta^{v_4}(t,x))\\
	&+\nabla^3 \sigma_j(X_t^x)(\xi^{v_1,v_4}(t,x),\eta^{v_2}(t,x),\eta^{v_3}(t,x))\\
	&+\nabla^3 \sigma_j(X_t^x)(\xi^{v_2,v_3}(t,x),\eta^{v_1}(t,x),\eta^{v_4}(t,x))\\
	&+\nabla^3 \sigma_j(X_t^x)(\xi^{v_2,v_4}(t,x),\eta^{v_1}(t,x),\eta^{v_3}(t,x))\\
	&+\nabla^3 \sigma_j(X_t^x)(\xi^{v_3,v_4}(t,x),\eta^{v_1}(t,x),\eta^{v_2}(t,x))\\
	&+\nabla^4 \sigma_j(X_t^x)(\eta^{v_1}(t,x),\eta^{v_2}(t,x),\eta^{v_3}(t,x),\eta^{v_4}(t,x)).
\end{align*}
Then we have 
\begin{align*}
	&\frac{1}{2q}\ee |\varpi^{v_1,v_2,v_3,v_4}(t,x)|^{2q}+(L_1-\delta)\ee\int_0^t  |\varpi^{v_1,v_2,v_3,v_4}(x)|^{2q}\dd s\\
	\le &C\ee\int_0^t|J_{31}|^{2q}\dd s+C\sum_{j=1}^m \ee\int_0^t |J^j_{32}|^{2q}\dd s\\
	\le &C\sum_{i=1}^{14}\ee\int_0^t|J_{31i}|^{2q}\dd s+C \sum_{i=1}^{14}\sum_{j=1}^m\ee\int_0^t |J^j_{32i}|^{2q}\dd s.
\end{align*}
For $\sum_{i=1}^4 \ee\int_0^t |J_{31i}|^{2q}\dd s$, we employ Assumption \eqref{A2}, H\"older inequality, the estimates \eqref{D1X-est}, \eqref{D2X-est}, and \eqref{D3X-est} to get
\begin{align*}
\sum_{i=1}^{14} \ee\int_0^t |J_{31i}|^{2q}\dd s
	\le C[\1_{\gamma\in(1,2]}+\1_{\gamma\geq 2}(1+|x|^{6q(\gamma-2)})]|v_1|^{2q}|v_2|^{2q}|v_3|^{2q}|v_4|^{2q}.
\end{align*}
Similarly, we have
\begin{align*}
	\sum_{i=1}^{14} \sum_{j=1}^{m}\ee\int_0^t |J^j_{31i}|^{2q}\dd s
	\le C[\1_{\gamma\in(1,2]}+\1_{\gamma\geq 2}(1+|x|^{6q(\gamma-2)})]|v_1|^{2q}|v_2|^{2q}|v_3|^{2q}|v_4|^{2q}.
\end{align*}
Hence
\begin{align*}
	&\frac{1}{2q}\ee |\varpi^{v_1,v_2,v_3,v_4}(t,x)|^{2q}+(L_1-\delta)\ee\int_0^t  |\varpi^{v_1,v_2,v_3,v_4}(x)|^{2q}\dd s\\
	& \le C[\1_{\gamma\in(1,2]}+\1_{\gamma\geq 2}(1+|x|^{6q(\gamma-2)})]|v_1|^{2q}|v_2|^{2q}|v_3|^{2q}|v_4|^{2q},
\end{align*}
and we conclude 
\begin{align} \label{D4X-est}
  & \ee|\nabla^4 X_t^x(v_1, v_2,v_3, v_4)|^{2q} \nonumber \\
    & \le C e^{-2q\lambda t} 
    [ \1_{\gamma\in(1,2]} + \1_{\gamma >2}
   ( 1+  |x|^{6q(\gamma-2)} ) ] |v_1|^{2q} |v_2|^{2q}  |v_3|^{2q} |v_4|^{2q}.
\end{align} 
Thus, we obtain the estimate \eqref{DX-est} from \eqref{D1X-est}, \eqref{D2X-est}, \eqref{D3X-est}, and \eqref{D4X-est}.

Finally, it follows from \eqref{fp}, \eqref{pt}, and \eqref{DX-est} with $i=1$ that 
	 \begin{align*}
\|\nabla f_\varphi(x)\|_{\LL(\rr^d)}
& \le   \|\nabla \varphi\|_1 \int_0^{+\infty} \ee \|\nabla X_t^x\|_{\LL(\rr^d)} \dif t 
\le  C_1 (1+|x|^{\gamma_1})  \|\nabla \varphi\|_1,
 \end{align*}
 which shows the estimate \eqref{est-fp} with $i=1$.
 The remaining estimates  \eqref{est-fp} with $i=2, 3, 4$ are analogously, and we omit the details.
\end{proof}

\subsection{Ergodic Estimates of Numerical Schemes}

We now apply Theorem \ref{tm-err} to specific implicit and explicit numerical schemes.

\begin{ex} \label{ex-sde}
(1) \textbf{MEM (Modified Euler Method):}
\begin{align} \label{MEM} 
Y_{n+1}
=
\PPP(Y_n)
+ b_\tau(\PPP(Y_n)) \tau
+ \sigma_\tau(\PPP(Y_n))  
\delta_n W, \quad n \in \nn,
\end{align}
where $\PPP:\rr^d \rightarrow \rr^d$ is a modification function, and $b_\tau(x)$ and $\sigma_{j,\tau}(x)$, $j \in \{1,2,\cdots,m\}$, are $\rr^d$-valued measurable functions.
This includes the following choices.

(a) \textbf{TEM (Tamed Euler Method):}  
    \begin{align} \label{tem}
\begin{split}
\PPP(x):=x, ~ 
b_\tau(x):=\frac{b(x)}{(1+\tau|x|^{4(\gamma-1)})^{1/4}}, \quad 
\sigma_\tau(x):=\frac{\sigma(x)}{(1+\tau|x|^{4(\gamma-1)})^{1/4}},
        \quad x \in \rr^d.
        \end{split}
    \end{align}
    
(b) \textbf{PEM (Projected Euler Method):}
\begin{align}\label{pem}
\begin{split}
         \PPP(x)
         &:=
         \left\{\begin{array}{ll}
         \min \{1, \tau^{-\frac{1}{2\gamma}}|x|^{-1}\} x, & x \neq 0, \\
         0, &  x=0,
         \end{array} 
         \right. ~
         b_\tau(x):= b(x), ~ 
         \sigma_\tau(x):=\sigma(x), \quad x \in \rr^d.
         \end{split}
\end{align} 

(2)
\textbf{BEM (Backward Euler Method):}
	\begin{align} \label{bem}
		Y_{n+1}=Y_n+b(Y_{n+1})\tau
		+ \sigma(Y_n)\delta_n W,
		\quad n \in \nn.
	\end{align} 
\end{ex}

\begin{rk} \label{rk}
(1) The discrete processes $(Y_k)_{k\in\mathbb N}$ given in Example \ref{ex-sde} are homogeneous Markov chains and possess unique invariant measures, all denoted by $\pi_\tau$.
They are $V$--geometrically ergodic with $V(x):=1+|x|^2$, $x \in \rr^d$; see  \cite{LW25} for the BEM \eqref{bem} and \cite{LWWZ25} for the TEM \eqref{tem} and the PEM \eqref{pem}.

(2) The following estimates hold \cite[Example 2.2]{LWWZ25}: there exist positive constants $C$, $ \alpha_1 \ge 1$, and $ \alpha_2 \ge 1$ such that for any $x \in \rr^d$ and $j \in \{1,2,\cdots,m\}$,
    \begin{align} \label{est-bem-}
    \begin{split} 
    |\PPP(x)| &\le  |x|, \\ 
            | b_\tau(\PPP(x)) | & \le C|b(x)|,\\
            | \sigma_{j,\tau}(\PPP(x))  | &\le C|\sigma_j(x)|,\\
            |b_\tau(\PPP(x))-b(\PPP(x))|+
          \sum_{j=1}^m |\sigma_{j,\tau}(\PPP(x))-\sigma_j(\PPP(x))|
            &\le C \tau (1+|x|^{\alpha_1}), \\ 
    |\PPP(x)-x| & \le  C \tau^2 |x|^{\alpha_2}.
    \end{split}
\end{align} 
For TEM \eqref{tem}, $\alpha_1=5\gamma-4$ and $\alpha_2>0$ arbitrary; for PEM \eqref{pem}, $\alpha_1=1$ and $\alpha_2=4\gamma+1$.
\end{rk}

	\begin{thm} \label{tm-sde}
	Let Assumptions \ref{A1} and \ref{A2} hold with $2 p^{\star} \ge \max\{5\gamma-4, 4\gamma+1\}$.  
There exists a positive constant $C$ such that 
		\begin{align}  
\sup_{\|\varphi\|_4 \le 1} |\pi_\tau(\varphi)-\pi(\varphi)|
& \le C \tau, \label{err-tau}   \\ 
 \Big|\frac{1}{N}\sum_{k=0}^{N-1}\mathbb E\varphi(Y_k^x)-\pi(\varphi) \Big|
& \le C(1+|x|^2) (\tau+N^{-1}). \label{err-N} 		
		\end{align}
\end{thm} 
	
	\begin{proof}
	Let $\|\varphi\|_4 \le 1$.
	
	(1) For the MEM \eqref{MEM}, the associated intepolation process $(\hat Y_t)_{t\in[0,\tau]}$ can be choosen as 
\begin{align}
\hat Y_t=\PPP(Y_0)
+ \int_0^t b_\tau(\PPP(Y_0)) \dif s
+ \int_0^t \sigma_\tau(\PPP(Y_0))  
\dif W(s), \quad t \in [0, \tau],
	\end{align}
	i.e., $g_\tau(x)=\tilde g_\tau(x)=\PPP(x)$, $x \in \rr^d$.
It is straightforward to verify that $\hat Y_\tau=Y_1$, and that $\hat Y$ is a continuous It\^o process.  

Let us first estimate the term $\ee^{\pi_\tau} \mathcal A_\tau f_\varphi(\PPP(Y_0))$ with $Y_0=X_0 \sim \pi_\tau$. 
As $\pi_\tau$ is an invariant measure of \eqref{MEM}, we have $\hat Y_\tau=Y_1 \sim \pi_\tau$.
Thus, $f(\hat Y_\tau)$ and $f(Y_0)$ share the same law so that 
$\ee^{\pi_\tau} [\ee [f(\hat Y_\tau^x)-f(Y_0)]|_{x=\PPP(Y_0)}]=0$. 
	Then \eqref{Atau} implies 
	\begin{align} \label{e-atau}
	 |\ee^{\pi_\tau} \mathcal A_\tau f_\varphi(\PPP(Y_0))| 
	& =|\ee^{\pi_\tau}  \ee [f(Y_0)-f(x)]|_{x=\PPP(Y_0)}]| \nonumber \\
	& \le \|\nabla f\| \ee^{\pi_\tau} |Y_0-\PPP(Y_0)|.
	\end{align}
 
Let $y=\PPP(x)$.
By the first two identities of the representation \eqref{R}, It\^o formula, and the identity \eqref{af}, we have  
\begin{align*}
	& |\ee \mathcal R_1(x, g_\tau(x))| \\
	&=\Big|\int_0^\tau \ee \int_0^s \Big[\nabla^2 f(\hat Y_r^{y}) (b_\tau(\PPP(x)), b_\tau(\PPP(x))) \\
	& \quad + \frac{1}{2} \sum_{j=1}^m \nabla^3 f(\hat Y_r^y) (b_\tau(\PPP(x)),  \sigma_{j, \tau}(\PPP(x)), \sigma_{j, \tau}(\PPP(x))) \Big] \dif r \dif s\Big|\\
	&\le \frac14 (2 \Vert \nabla^2 f\Vert |b_\tau(\PPP(x))|^2
	+ \Vert \nabla^3 f \Vert \| \sigma_\tau(\PPP(x))\|_{HS}^2 \vert b_\tau(\PPP(x))\vert) \tau^2, \\
	& |\ee \mathcal R_2(x, g_\tau(x))| \\
	&=\frac{1}{2} \Big|\int_0^\tau \sum_{j=1}^m \ee \Big[ \int_0^s\nabla^3 f(\hat Y_r^y) (b_\tau(x), \sigma_{j, \tau}(x), \sigma_{j, \tau}(x)) \\
	& \quad + \frac{1}{2} \sum_{k=1}^m \nabla^4 f(\hat Y_r^y) (\sigma_{j, \tau}(\PPP(x)),  \sigma_{j, \tau}(\PPP(x)), \sigma_{k, \tau}(x), \sigma_{k, \tau}(\PPP(x))) \dif r \Big] \dif s\Big|\\
	&\le \frac18 (2 \Vert \nabla^3 f \Vert \| \sigma_\tau(\PPP(x))\|_{HS}^2 |b_\tau(\PPP(x))|
	+ \| \nabla^4 f \| \| \sigma_\tau(\PPP(x))\|_{HS}^4)\tau^2.
\end{align*}
Similarly, the last four identities of the representation \eqref{R} imply that
\begin{align*} 
	|\ee \mathcal R_3(x, g_\tau(x))| 
	& \le \|\nabla^2 f\| |\PPP(x)-x| |b_\tau(\PPP(x))| \tau, \\ 
	|\ee \mathcal R_4(x, g_\tau(x))| 
	& \le \|\nabla^3 f\|  |\PPP(x)-x| \|\sigma_\tau(\PPP(x))\|_{HS}^2 \tau, \\
	|\ee \mathcal R_5(x, g_\tau(x))| 
	& \le \|\nabla f\| |b_\tau(\PPP(x))-b(x)| \tau, \\ 
	|\ee \mathcal R_6(x, g_\tau(x))| 
	& \le \|\nabla^3 f\| \sum_{j=1}^m |\sigma_{j, \tau}(\PPP(x))-\sigma_j(x)| \tau.   
\end{align*} 

In combination the estimates in \eqref{est-bem-} with Assumption \ref{A2} and the estimate \eqref{e-atau}, we obtain  
\begin{align} \label{est-R-mem}
\Big|\sum_{i=1}^6 \ee ^{\pi_\tau} [\ee \mathcal R_i(x, g_\tau(x))|_{x=Y_0}] \Big| 
& \le C \|\nabla^4 f\| (1 + \pi_\tau( |\cdot|^{\alpha_1}))  \tau^2, \\
\label{e-atau+}
	 |\ee^{\pi_\tau} \mathcal A_\tau f_\varphi(\PPP(Y_0))| 
	& \le C \|\nabla f\| \pi_\tau( |\cdot|^{\alpha_2}) \tau^2.
	\end{align}
	Then we conclude \eqref{err-tau} by the above two estimates \eqref{est-R-mem} and \eqref{e-atau+}, together with the estimate \eqref{err}. 
	
	Finally, we need to remark that $\pi_\tau(|\cdot|^{\max\{\alpha_1, \alpha_2\}}) <\infty$, as $\pi_\tau$ possesses all bounded $2p$-moments for any $p \in [1,p^{\star}] \cap \nn_+$, which follows from the moments estimate of the MEM \eqref{MEM} in \cite[Theorem 3.2]{LWWZ25}.

(2) For the BEM \eqref{bem}, the associated intepolation process $(\hat Y_t)_{t\in[0,\tau]}$ can be choosen as 
\begin{align}
\hat Y_t=\hat Y_0+\int_0^t b(Y_0)\dif s+\int_0^t \sigma(Y_0)\dif W(s), \quad t \in [0, \tau],
	\end{align}
i.e., $g_\tau(x):=x-b(x) \tau$ and $\tilde g_\tau(x)=x$, $x \in \rr^d$.  
	It is straightforward to verify that $\hat Y_\tau=Y_1-b (Y_1)\tau=g_\tau(Y_1)$, and that $\hat Y$ is continuous so that it is an It\^o process.  
	
	Let $Y_0 \sim \pi_\tau$. As $\pi_\tau$ is an invariant measure of \eqref{y}, we have $Y_1 \sim \pi_\tau$.
	Since $\hat Y_0=g_\tau(Y_0)$ and $\hat Y_\tau=g_\tau(Y_1)$, $\hat Y_0$ and $\hat Y_\tau$ have the same law. 
	Then \eqref{Atau} implies $\ee^{\pi_\tau} \mathcal A_\tau f_\varphi(\hat Y_0)=0$.

Similarly to the above estimates of $\ee \mathcal R_i(x, g_\tau(x))$ with $i=1,2,3,4$ for the MEM \eqref{MEM}, we have  
\begin{align*}
|\ee \mathcal R_1(x, g_\tau(x))|
&\le \frac14 (2 \Vert \nabla^2 f\Vert |b(x)|^2
+ \Vert \nabla^3 f \Vert \| \sigma(x)\|_{HS}^2 \vert b(x)\vert) \tau^2, \\
|\ee \mathcal R_2(x, g_\tau(x))|
& \le \frac18 (2 \Vert \nabla^3 f \Vert \| \sigma(x)\|_{HS}^2 |b(x)|
+ \| \nabla^4 f \| \| \sigma(x)\|_{HS}^4)\tau^2 \\
|\ee \mathcal R_3(x, g_\tau(x))| 
& \le \|\nabla^2 f\| |b(x)|^2 \tau^2, \\ 
|\ee \mathcal R_4(x, g_\tau(x))| 
& \le \|\nabla^3 f\| |b(x)| \|\sigma(x)\|_{HS}^2 \tau^2. 
\end{align*}
Since $\tilde g_\tau(x)=x$, $x \in \rr^d$, and there is no tame for the BEM \eqref{bem}, it is clear that 
\begin{align*}
|\ee \mathcal R_5(x)| = |\ee \mathcal R_6(x)|=0.
\end{align*}
Consequently, 
\begin{align} \label{est-R-bem}
\Big|\sum_{i=1}^6 \ee \mathcal R_i(x)\Big| 
& \le  C (1 + |b(x)|^2+|b(x)| \|\sigma(x)\|_{HS}^2+\|\sigma(x)\|_{HS}^4)  \tau^2 \nonumber \\
& \le C (1 +  |x|^{2(\gamma+1)})  \tau^2.
\end{align}  
In combination with the estimate \eqref{est-R-bem} and the fact $\ee^{\pi_\tau} \mathcal A_\tau f_\varphi(\hat Y_0)=0$, we conclude \eqref{err-tau}. 

Finally, we mention that $\pi_\tau(|\cdot|^{2(\gamma+1)}) <\infty$, as $\pi_\tau$ possesses all bounded $2p$-moments for any $p \in [1,p^{\star}] \cap \nn_+$, which follows from the moments estimate of the BEM \eqref{bem} in \cite[Lemma 10]{LW25}.

(3) The error estimate \eqref{err-N} of the ergodic limit follows from the $V$-geometrical ergodicity of these one-step approximations in Remark \ref{rk} and the error \eqref{lim}.
\end{proof}

\begin{rk} 
Using the same arguments as in the proof of Theorem \ref{tm-sde}, we obtain the following sharp ergodic error estimates for the EM scheme with a unique invariant measure $\pi_\tau$, under Lipschitz conditions as in \cite{FSX19, JSS23, PP23}: 
		\begin{align*}  
\sup_{\|\varphi\|_{\CC_b^4} \le 1} |\pi_\tau(\varphi)-\pi(\varphi)| \le C \tau, \quad 
\Big|\frac{1}{N}\sum_{k=0}^{N-1}\mathbb E\varphi(Y_k^x)-\pi(\varphi) \Big|
 \le C(1+|x|^2) (\tau+N^{-1}).
		\end{align*}
\end{rk}

\begin{rk}
If one can derive the regularity estimate \eqref{est-fp} of the Stein equation solution for any $\varphi \in \BB_b(\rr^d)$ or ${\rm Lip}(\rr^d)$, then the ergodic error estimates in Theorem \eqref{tm-sde} hold under total variation distance and Wasserstein distance, respectively.
\end{rk}

\section{Numerical Experiments}
\label{sec4}

This section presents comprehensive numerical investigations to validate the ergodicity (via the strong mixing property) and the ergodic estimates for the BEM \eqref{bem} and the TEM \eqref{tem}. We consider the following coefficients:
\begin{align*}
    b(x) = -x - x^3, \qquad \sigma(x) = \frac{1}{2}\sqrt{x^{2}+1}, \quad x \in \mathbb{R}.
\end{align*}

\subsection{Verification of Assumptions}

We first verify that the chosen coefficients satisfy all assumptions required by our theoretical framework.

For any $x, y \in \mathbb{R}$, direct computation yields
\begin{align*}
    \< x - y, b(x) - b(y) \> &= (x-y) [(-x-x^{3}) - (-y-y^{3})] \\
    &= -(x-y)^{2} - (x-y)^{2}(x^{2} + xy + y^{2}) \le -(x-y)^{2}.
\end{align*} 
Moreover, using the Lipschitz continuity of the square-root function,
\begin{align*}
    |\sigma(x) - \sigma(y)|^{2} = \frac{1}{4}\Big|\sqrt{x^{2}+1} - \sqrt{y^{2}+1}\Big|^{2} \le \frac{1}{4}|x-y|^{2}.
\end{align*}
Consequently, for any $p^{\star} \ge 2$, we obtain
\begin{align*}
    \< x-y, b(x)-b(y) \> + \frac{2p^{\star}-1}{2}|\sigma(x)-\sigma(y)|^{2}
    \le -\Bigl[1 - \frac{2p^{\star}-1}{8}\Bigr] |x-y|^{2}.
\end{align*}
Choosing $L_1 = 1 - (2p^{\star}-1)/8 > 0$ (e.g., $L_1 = 5/8$ when $p^{\star}=2$) ensures the monotonicity condition \eqref{mon} in Assumption \ref{A1} holds.

Regarding the dissipative condition \eqref{coe}, we have
\begin{align*}
    \< x, b(x) \> + \frac{p^{\star}(2p^{\star}-1)}{2}|\sigma(x)|^{2}
    &= -x^{2} - x^{4} + \frac{p^{\star}(2p^{\star}-1)}{8}(x^{2}+1) \\
    &\le L_2 - L_3 |x|^{4},
\end{align*}
where one may take $\gamma = 3$ and select suitable positive constants $L_2, L_3$. 
Since $b$ is cubic and $\sigma$ is smooth with derivatives of polynomial growth, Assumption \ref{A2} is also satisfied. Hence, the coefficient pair $(b,\sigma)$ is admissible under our theoretical setting.

\subsection{Strong Mixing Property}

To examine the strong mixing property (which implies unique ergodicity) of TEM \eqref{tem} and the BEM \eqref{bem}, we fix the step-size $\tau = 0.2$ and simulate $N = 5000$ steps, corresponding to a final time $T = 1000$. Three distinct initial values, $X_0 = -5, \;5, \;15$, are used. For each scheme, $M = 5000$ independent sample paths are generated. 

\begin{figure}  \hspace{-4em}
	\subfigure{
		\begin{minipage}[t]{0.475\linewidth}
			\includegraphics[width=7.5cm]{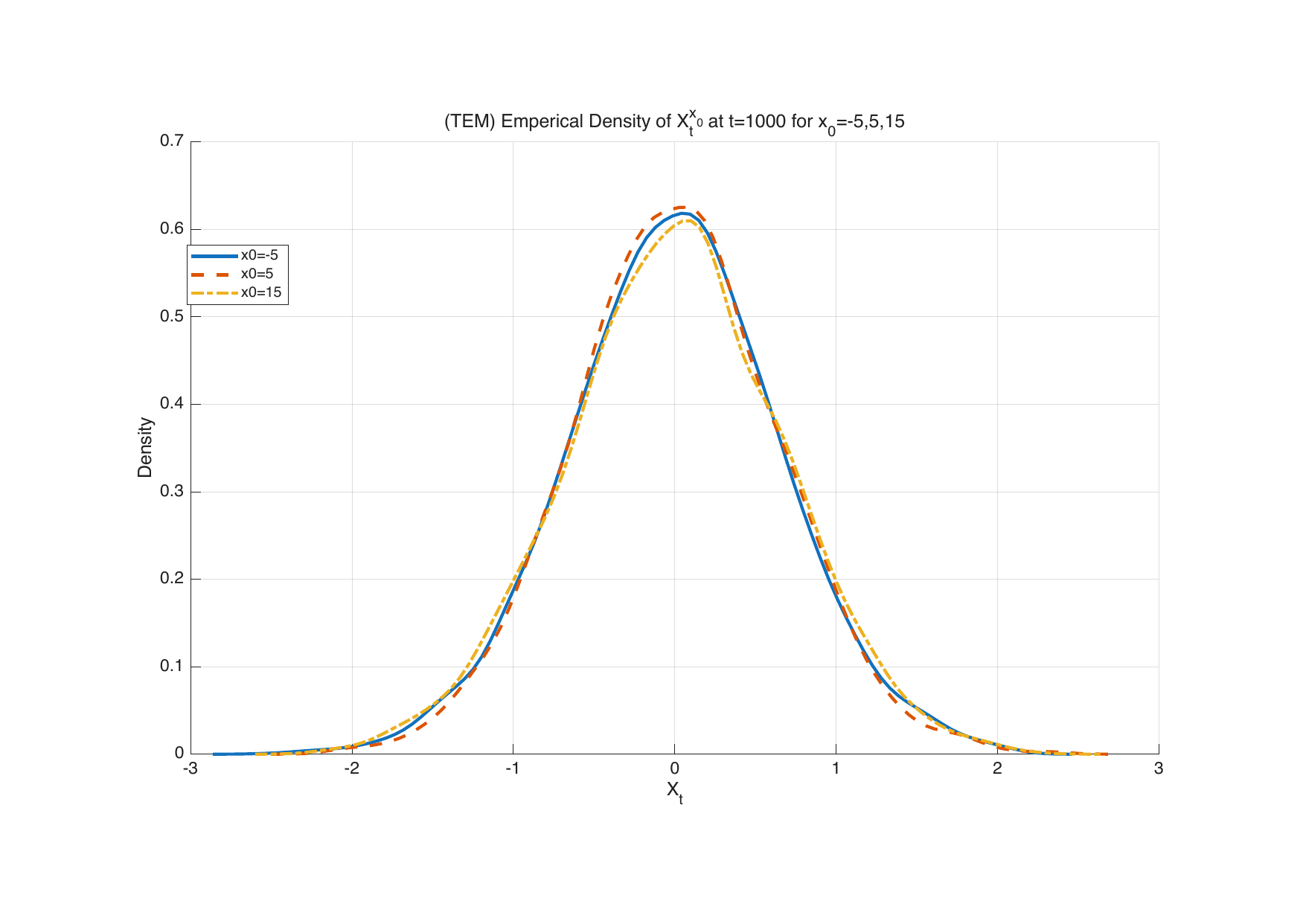}
		\end{minipage} } 
	\subfigure{
		\begin{minipage}[t]{0.475\linewidth}
			\includegraphics[width=7.5cm]{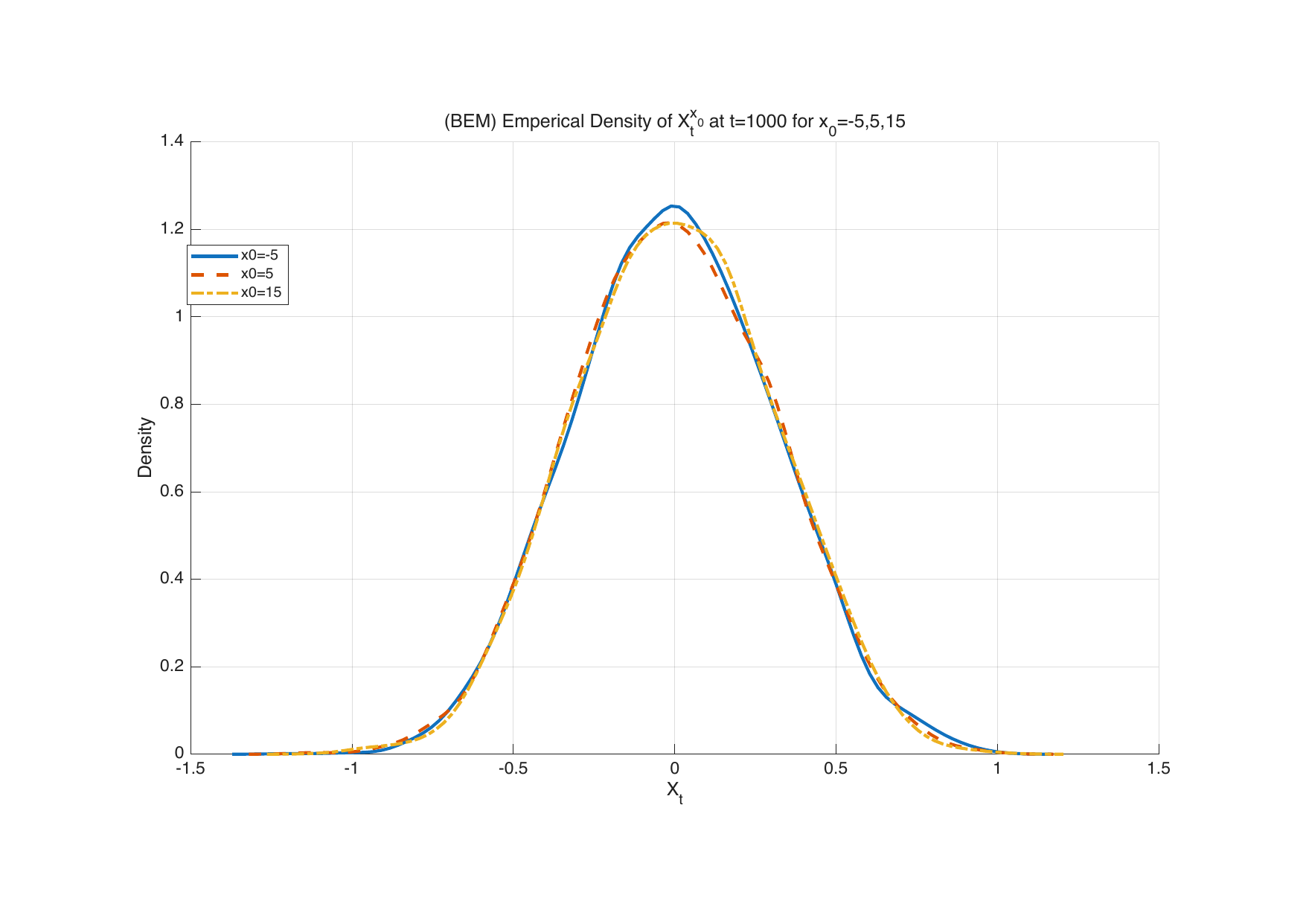}
		\end{minipage} }
	\caption{Empirical density functions of TEM \eqref{tem} (left) and spatial BEM \eqref{bem} (right).}\label{density}
\end{figure} 

The empirical density at the terminal time $T = 1000$ is reconstructed via Gaussian kernel density estimation. 
Figure \ref{density} displays the resulting empirical density functions for the TEM \eqref{tem} (left) and the BEM \eqref{bem} (right), respectively. Despite the disparate initial conditions, the density curves for each scheme exhibit nearly perfect overlap. This visual agreement numerically confirms that both discretizations possess the strong mixing property and, consequently, that their associated Markov chains are uniquely ergodic.

\subsection{Weak Convergence Order}

We now investigate the convergence order of the ergodic errors (i.e., weak convergence order) in Theorem \ref{tm-sde} of these two schemes. 
The test functions are chosen as $\varphi_1(x) = \cos(x)$ and $\varphi_2(x) = e^{-x^{2}}$, $x \in \mathbb R^d$, the initial value is fixed as $X_0 = 1$, and the simulation horizon is set to $T = 32$. A reference solution is computed using a fine step-size $\tau_{\text{ref}} = 2^{-11}$. Comparative simulations are performed with coarser step-sizes $\tau_i = 2^{-i}$ for $i = 3,4,5,6,7$. Expectations are approximated via Monte Carlo averages over $M = 20000$ independent sample paths.
\begin{figure} \hspace{-3em}
	\subfigure{
		\begin{minipage}[t]{0.475\linewidth}
			\includegraphics[width=7.5cm]{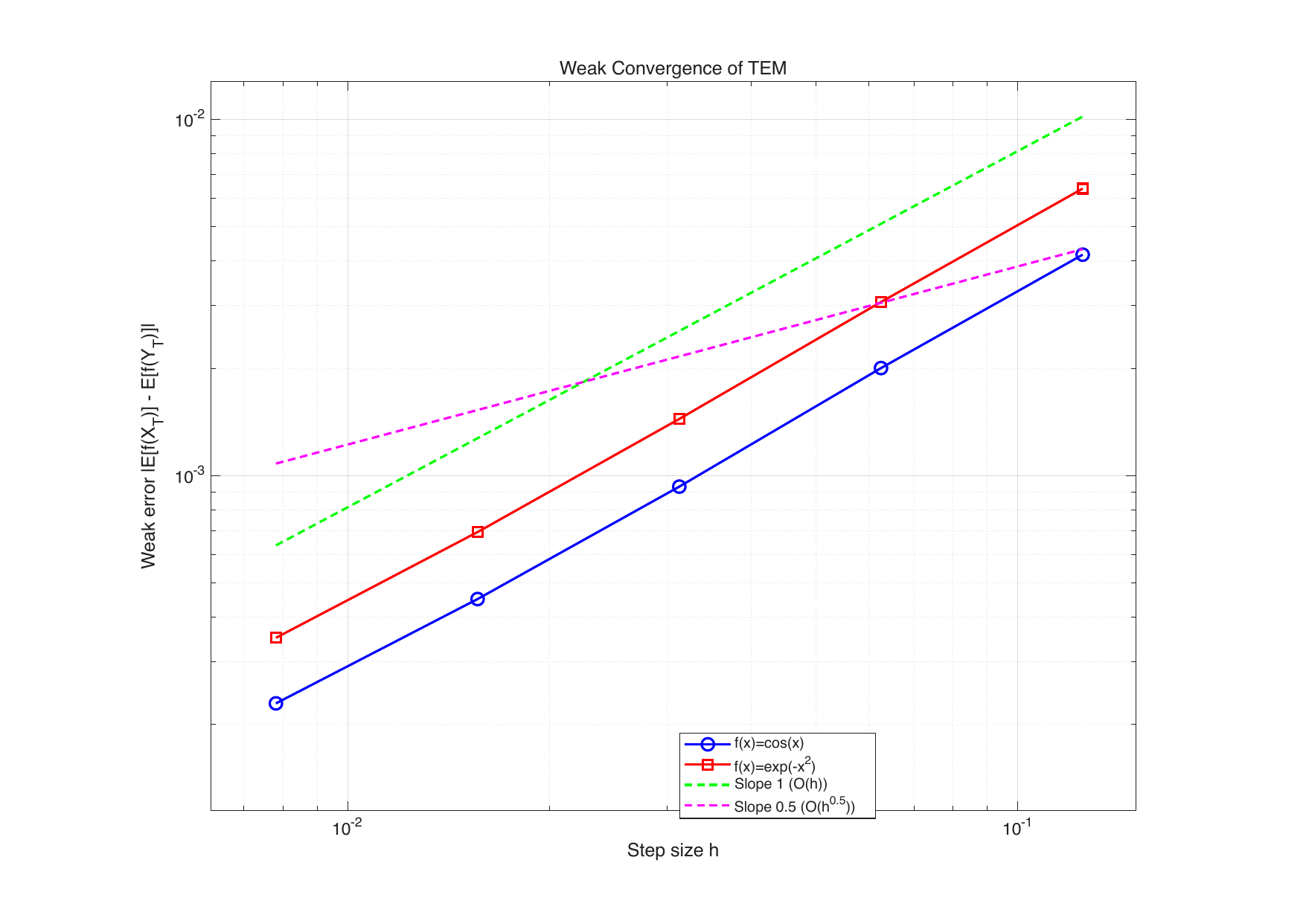}
		\end{minipage} }
	\subfigure{
		\begin{minipage}[t]{0.475\linewidth}
			\includegraphics[width=7.5cm]{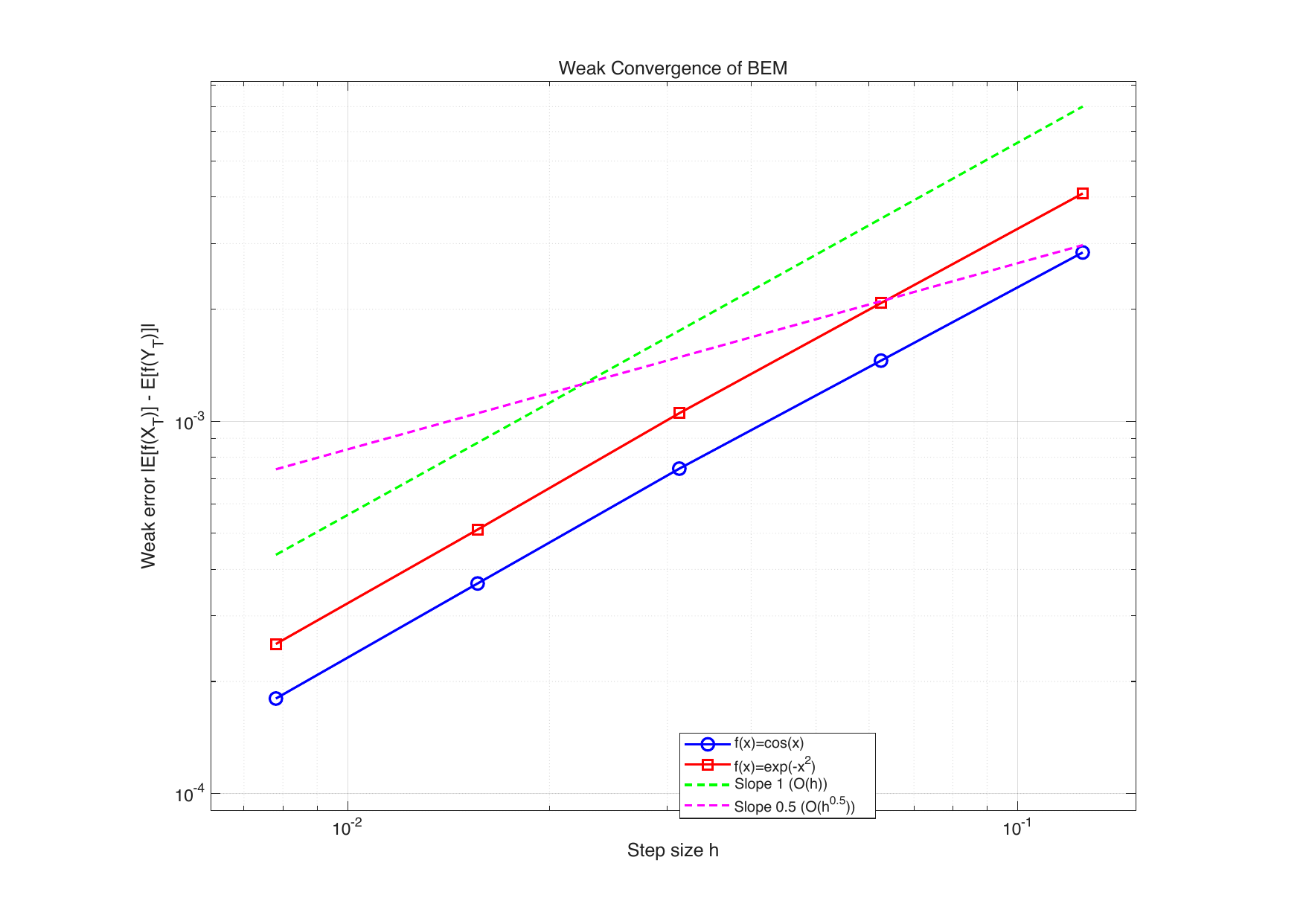}
		\end{minipage} }
	\caption{Weak convergence orders of TEM \eqref{tem} (left) and spatial BEM \eqref{bem} (right).}\label{order}
\end{figure} 

Figure \ref{order} presents, in double logarithmic coordinates, the weak errors of the TEM \eqref{tem} and the BEM \eqref{bem} as functions of $\tau$. For TEM, the estimated convergence orders are $1.07$ with $\varphi_1(x) = \cos(x)$ and $1.08$ with $\varphi_2(x) = e^{-x^2}$. For BEM, the corresponding estimates are $1.08$ and $1.02$. These numerical results robustly support the theoretical prediction in Theorem \ref{tm-sde}, which shows that both schemes achieve a first-order ergodic error for SODEs with superlinear coefficients driven by multiplicative noise.

\bibliographystyle{abbrv}
\bibliography{bib}

\begin{thebibliography}{10}

\bibitem{Bar90}
A.~D. Barbour.
\newblock Stein's method for diffusion approximations.
\newblock {\em Probab. Theory Related Fields}, 84(3):297--322, 1990.

\bibitem{FSX19}
X.~Fang, Q.-M. Shao, and L.~Xu.
\newblock Multivariate approximations in {W}asserstein distance by {S}tein's
  method and {B}ismut's formula.
\newblock {\em Probab. Theory Related Fields}, 174(3-4):945--979, 2019.

\bibitem{HJK11}
M.~Hutzenthaler, A.~Jentzen, and P.~E. Kloeden.
\newblock Strong and weak divergence in finite time of {E}uler's method for
  stochastic differential equations with non-globally {L}ipschitz continuous
  coefficients.
\newblock {\em Proc. R. Soc. Lond. Ser. A Math. Phys. Eng. Sci.},
  467(2130):1563--1576, 2011.

\bibitem{JSS23}
X.~Jin, T.~Shen, and Z.~Su.
\newblock Using {S}tein's method to analyze {E}uler-{M}aruyama approximations
  of regime-switching jump diffusion processes.
\newblock {\em J. Theoret. Probab.}, 36(3):1797--1828, 2023.

\bibitem{LMYY18}
X.~Li, Q.~Ma, H.~Yang, and C.~Yuan.
\newblock The numerical invariant measure of stochastic differential equations
  with {M}arkovian switching.
\newblock {\em SIAM J. Numer. Anal.}, 56(3):1435--1455, 2018.

\bibitem{LL25+}
H.~Liu and Z.~Liu.
\newblock Inversions of stochastic processes from ergodic measures of nonlinear
  {SDE}s.
\newblock {\em arXiv:2512.01307}, 2025.

\bibitem{LMW23}
W.~Liu, X.~Mao, and Y.~Wu.
\newblock The backward {E}uler-{M}aruyama method for invariant measures of
  stochastic differential equations with super-linear coefficients.
\newblock {\em Appl. Numer. Math.}, 184:137--150, 2023.

\bibitem{LR15}
W.~Liu and M.~R\"ockner.
\newblock {\em Stochastic partial differential equations: an introduction}.
\newblock Universitext. Springer, Cham, 2015.

\bibitem{Liu22}
Z.~Liu.
\newblock {$L^p$}-convergence rate of backward {E}uler schemes for monotone
  {SDE}s.
\newblock {\em BIT}, 62(4):1573--1590, 2022.

\bibitem{LL25}
Z.~Liu and Z.~Liu.
\newblock Numerical unique ergodicity of monotone {SDE}s driven by
  nondegenerate multiplicative noise.
\newblock {\em J. Sci. Comput.}, 103(3):Paper No. 87, 21, 2025.

\bibitem{LW25}
Z.~Liu and X.~Wang.
\newblock Weak error estimates of ergodic approximations for monotone
  jump-diffusion {SODE}s.
\newblock {\em arXiv:2509.15698}, 2025.

\bibitem{LWWZ25}
Z.~Liu, X.~Wang, X.~Wu, and X.~Zhang.
\newblock Non-asymptotic error analysis of explicit modified {Euler} methods
  for superlinear and non-contractive {SODE}s.
\newblock {\em arXiv:2509.08410}, 2025.

\bibitem{LW24}
Z.~Liu and X.~Wu.
\newblock {Geometric ergodicity and strong error estimates for tamed schemes of
  super-linear {SODE}s}.
\newblock {\em arXiv:2411.06049}, 2024.

\bibitem{MSH02}
J.~C. Mattingly, A.~M. Stuart, and D.~J. Higham.
\newblock Ergodicity for {SDE}s and approximations: locally {L}ipschitz vector
  fields and degenerate noise.
\newblock {\em Stochastic Process. Appl.}, 101(2):185--232, 2002.

\bibitem{MST10}
J.~C. Mattingly, A.~M. Stuart, and M.~V. Tretyakov.
\newblock Convergence of numerical time-averaging and stationary measures via
  {P}oisson equations.
\newblock {\em SIAM J. Numer. Anal.}, 48(2):552--577, 2010.

\bibitem{PP23}
G.~Pag\`es and F.~Panloup.
\newblock Unadjusted {L}angevin algorithm with multiplicative noise: total
  variation and {W}asserstein bounds.
\newblock {\em Ann. Appl. Probab.}, 33(1):726--779, 2023.

\bibitem{RT96}
G.~O. Roberts and R.~L. Tweedie.
\newblock Exponential convergence of {L}angevin distributions and their
  discrete approximations.
\newblock {\em Bernoulli}, 2(4):341--363, 1996.

\bibitem{Sha18}
J.~Shao.
\newblock Invariant measures and {E}uler-{M}aruyama's approximations of
  state-dependent regime-switching diffusions.
\newblock {\em SIAM J. Control Optim.}, 56(5):3215--3238, 2018.

\bibitem{Ste72}
C.~Stein.
\newblock A bound for the error in the normal approximation to the distribution
  of a sum of dependent random variables.
\newblock In {\em Proceedings of the {S}ixth {B}erkeley {S}ymposium on
  {M}athematical {S}tatistics and {P}robability ({U}niv. {C}alifornia,
  {B}erkeley, {C}alif., 1970/1971), {V}ol. {II}: {P}robability theory}, pages
  583--602. Univ. California Press, Berkeley, CA, 1972.

\bibitem{Tal02}
D.~Talay.
\newblock Stochastic {H}amiltonian systems: exponential convergence to the
  invariant measure, and discretization by the implicit {E}uler scheme.
\newblock {\em Markov Process. Related Fields}, 8(2):163--198, 2002.

\end{thebibliography}
        
\end{document}